\documentclass[final]{siamart}
\include{macro_list}

\usepackage{tikz,pgflibraryplotmarks}
\usepackage{pgf,pgfplots,pgfarrows} 

\newdimen\iwidth
\newdimen\iheight

\usepackage{multirow}

\usepackage{amsmath,amssymb} 
\usepackage{algorithm,algorithmic}
\newcommand{\norm}[2]{\|#1 \|_{#2}}
\newcommand{\normtwo}[1]{\|#1\|_2}
\newcommand{\trace}{\text{trace}}
\newcommand{\krylov}[1]{\mathcal{K}_{#1}}
\newcommand{\mb}[1]{\mathbb{#1}}
\newcommand{\mc}[1]{\mathcal{#1}}

\newcommand{\diag}[1]{\text{diag}\{ #1\}}

\newcommand{\Span}[1]{\text{Span}\left\{#1\right\}}
\newtheorem{prop}{Proposition}

\usepackage{color}

\numberwithin{theorem}{section}
\numberwithin{algorithm}{section}

\newcommand{\B}[1]{\boldsymbol{#1}}

\newcommand{\TheTitle}{Generalized hybrid iterative methods for large-scale Bayesian inverse problems}
\newcommand{\TheAuthors}{J. Chung and A. K. Saibaba}

\headers{Generalized hybrid iterative methods}{\TheAuthors}

\title{{\TheTitle}}
\author{
  Julianne Chung\thanks{Department of Mathematics, Virginia Tech, Blacksburg, VA
    (\email{jmchung@vt.edu}, \url{http://www.math.vt.edu/people/jmchung/}).}
  \and
  Arvind K. Saibaba\thanks{Department of Mathematics, North Carolina State University, Raleigh, NC
    (\email{asaibab@ncsu.edu}, \url{http://www4.ncsu.edu/\~asaibab/}).}
}

\ifpdf
\hypersetup{
  pdftitle={\TheTitle},
  pdfauthor={\TheAuthors}
}
\fi

\begin{document}
\maketitle

\begin{abstract}
We develop a generalized hybrid iterative approach for computing solutions to large-scale Bayesian inverse problems.  We consider a hybrid algorithm based on the generalized Golub-Kahan bidiagonalization for computing Tikhonov regularized solutions to problems where explicit computation of the square root and inverse of the covariance kernel for the prior covariance matrix is not feasible.  This is useful for large-scale problems where covariance kernels are defined on irregular grids or are only available via matrix-vector multiplication, e.g., those from the Mat\'{e}rn class. We show that iterates are equivalent to LSQR iterates applied to a directly regularized Tikhonov problem, after a transformation of variables, and we provide connections to a generalized singular value decomposition filtered solution. Our approach shares many benefits of standard hybrid methods such as avoiding semi-convergence and automatically estimating the regularization parameter.  Numerical examples from image processing demonstrate the effectiveness of the described approaches.
\end{abstract}

	\begin{keywords}
inverse problems, Bayesian methods, hybrid iterative methods, Tikhonov regularization, Golub-Kahan bidiagonalization, Mat\'{e}rn covariance kernels
	\end{keywords}

	\begin{AMS}
	 	65F22, 65F20, 65F30, 15A29
	\end{AMS}

\section{Introduction}
Inverse problems are prevalent in many important scientific applications, and computing solutions can be challenging, especially for large-scale problems \cite{Hansen2010,calvetti2007introduction}.  In this paper, we consider large-scale inverse problems of the form
\begin{equation}
	\label{eq:problem}
	\bfd = \bfA \bfs + \bfepsilon,
\end{equation}
where $\bfd\in \bbR^m$ contains the observed data, $\bfA \in \bbR^{m \times n}, m \geq n$\footnote{For clarity of presentation, we assume $m \geq n$, but these methods apply also to problems where $m<n.$} models the forward process, $\bfs \in \bbR^n$ represents the desired parameters, and $\bfepsilon \in \bbR^{m}$ represents noise in the data.  We assume that $\bfepsilon\sim\mathcal{N}(\bfzero,\bfR)$ where $\bfR$ is a positive definite matrix whose inverse and square root are inexpensive (e.g., a diagonal matrix with positive diagonal entries).
The goal is to compute an approximation of $\bfs,$ given $\bfd$ and $\bfA.$

We are interested in \emph{ill-posed} inverse problems, where the challenge is that small errors in the data may lead to large errors in the computed approximation of $\bfs.$  Regularization is required to stabilize the inversion process.  There are many forms of regularization.  Here, we follow a Bayesian framework, where we assume a prior for $\bfs$.  That is, we treat $\bfs$ as a Gaussian random variable with mean $\bfmu \in \bbR^n$ and covariance matrix $\bfQ$.  That is, $\bfs \sim \mathcal{N}(\bfmu,\lambda^{-2}\bfQ)$, where
$\lambda$ is a scaling parameter (yet to be determined) for the precision matrix.
Specific choices for $\bfQ$ will be discussed in Section~\ref{sec:Bayes}.  Using Bayes theorem, the posterior probability distribution function is given by
\[ p(\bfs|\bfd) \propto p(\bfd|\bfs)p(\bfs) = \exp\left(-\frac{1}{2}\|\bfA \bfs - \bfd \|_{\bfR^{-1}}^2 -  \frac{\lambda^2}{2}\| \bfs-\bfmu\|_{\bfQ^{-1}}^2\right),\]
where $\norm{\bfx}{\bfM} = \sqrt{\bfx\t \bfM \bfx}$ is a vector norm for any symmetric positive definite matrix $\bfM$. The maximum a posteriori (MAP) estimate provides a solution to~\eqref{eq:problem} and can be obtained by minimizing the negative log likelihood of the posterior probability distribution function, i.e.
\begin{align}\bfs_\lambda & = \argmin_\bfs\> -\log p(\bfs|\bfd) \nonumber \\
	& = \argmin_\bfs \> \frac{1}{2}\|\bfA \bfs -\bfd \|_{\bfR^{-1}}^2 +  \frac{\lambda^2}{2}\| \bfs-\bfmu\|_{\bfQ^{-1}}^2\,,
\label{eqn:LS_orig}
	\end{align}
which is equivalent to the solution of the following normal equations,
\begin{equation}\label{eqn:normal}
(\bfA\t \bfR^{-1}\bfA  + \lambda^2 \bfQ^{-1} ){\bfs} = \bfA\t \bfR^{-1} \bfd +\lambda^2 \bfQ^{-1}\bfmu .\end{equation}
In fact, the MAP estimate, $\bfs_\lambda$, is a Tikhonov-regularized solution, and iterative methods have been developed for computing solutions to the equivalent general-form Tikhonov problem,
\begin{equation}
	\label{eq:genTik}
	\min_\bfs\> \frac{1}{2}\|\bfL_\bfR(\bfA \bfs -\bfd)\|_2^2 +  \frac{\lambda^2}{2}\| \bfL_\bfQ(\bfs-\bfmu)\|_2^2\,,
	\end{equation}
where $\bfQ^{-1} = \bfL_\bfQ\t\bfL_\bfQ$ and $\bfR^{-1} = \bfL_\bfR\t\bfL_\bfR$.
For example, hybrid iterative methods have been investigated for the standard-form Tikhonov problem where $\bfL_\bfQ = \bfI$ in~\cite{OLeary1981, Kilmer2001,Bazan2010, gazzola2015krylov, ChNaOLe08, chung2015hybrid,hnvetynkova2009regularizing} and for the general-form Tikhonov problem in~\cite{reichel2012tikhonov,kilmer2007projection,gazzola2014generalized,hochstenbach2010iterative,hochstenbach2015golub}.  However, these previously-developed methods require $\bfL_\bfQ$ or $\bfL_\bfQ^{-1}$, which is not available in the scenarios of interest here (e.g., where $\bfQ$ represents a Mat\'{e}rn kernel or a dictionary collection).  In particular, we focus on the Mat\'{e}rn class of covariance kernels, which include among them as special cases, the exponential and the Gaussian kernel (sometimes, called squared exponential kernel). Although Mat\'{e}rn kernels represent a rich class of covariance kernels that can be adapted to the problem at hand, a main challenge is that the covariance matrix $\bfQ$ can be very large and dense, so working with its inverse or square root can be difficult. Furthermore, methods for selecting regularization parameter $\lambda$ for general-form Tikhonov are still under development, especially for large-scale problems.

\paragraph{Overview of main contributions}
In this work, we propose generalized iterative hybrid approaches for computing
approximations to~\eqref{eqn:LS_orig}.  After an appropriate change of variables, we exploit properties of the generalized Golub-Kahan bidiagonalization to develop a hybrid approach that is \emph{general} in that a rich class of covariance kernels can be incorporated, \emph{efficient} in that the main costs per iteration are matrix-vector multiplications (and not inverses and square-roots), and \emph{automatic} in that regularization parameters and stopping criteria can be determined during the iterative process. Theoretical results show that iterates of the generalized hybrid methods are equivalent to those of standard iterative methods when applied to a directly-regularized priorconditioned Tikhonov problem, after a transformation of variables.  Furthermore, we provide connections to generalized singular value decomposition (GSVD) filtered solutions for additional insight.

The paper is organized as follows. In Section~\ref{sec:Bayes}, we describe various choices for $\bfQ$,
and we describe a change of variables that {makes the computations feasible}.
To provide insight, we make connections between the transformed problem and a filtered GSVD interpretation.  However, since the GSVD is typically not available in practice, we describe in Section~\ref{sec:genlsqr} a hybrid iterative approach that is based on the generalized Golub-Kahan bidiagonalization method and that allows automatic estimation of the regularization parameter $\lambda$.  Then in Section~\ref{sec:connections} we provide some theoretical results that connect and distinguish our approach from existing methods.  Numerical results are provided in Section~\ref{sec:numerical_experiments}, and conclusions are provided in Section~\ref{sec:conclusions}.

\section{Problem setup}
\label{sec:Bayes}
We are interested in computing MAP estimates~\eqref{eqn:LS_orig} for large-scale Bayesian inverse problems.
In this section, we describe one example where this scenario may arise and then describe a change of variables that can avoid expensive matrix factorizations.  Then, we use some analytical tools such as a GSVD that is suited to our problem to provide insight and motivation for the proposed hybrid methods.

\subsection{Choice of ${\bfQ}$}
A prevalent approach in the literature models $\bfL_\bfQ$ as a sparse discretization of a differential operator (for example, the Laplacian or the biharmonic operator). This choice corresponds to a covariance matrix that represents a Gauss-Markov random field.  Since for these choices, the precision matrix (i.e., the inverse of the covariance matrix) is sparse, working with $\bfL_\bfQ$ directly has obvious computational advantages.  However, in many applications, the precision matrix is not readily available.  For example, in working with Gaussian random fields, entries of the covariance matrix are computed directly as $\bfQ_{ij} = \kappa(\bfx_i,\bfx_j)$, where $\{\bfx_i\}_{i=1}^n$ are the spatial points in the domain. There are many modeling advantages for this particular choice of $\bfQ$, but the challenge is that $\bfQ$ is often a very large and dense matrix.  However, for some covariance matrices, such as those that come from the Mat\'{e}rn family, there have been recent advancements on the efficient computation of matrix-vector products (henceforth referred to as MVPs) with $\bfQ$, and in this paper, we take advantage of these developments and develop methods that work directly with $\bfQ$. An explicit link between Gaussian fields and Gauss-Markov random fields using techniques from stochastic partial differential equations has been developed in~\cite{lindgren2011explicit}.

A popular choice for $\kappa(\cdot,\cdot)$ is from the Mat\'{e}rn family of covariance kernels~\cite{rasmussen2006gaussian}, which form an isotropic, stationary, positive-definite class of covariance kernels. We define the covariance kernel in the Mat\'{e}rn class as
\begin{equation}\label{eqn:maternfamily}
\kappa(\bfx_i,\bfx_j) = C_{\alpha,\nu}(r) = \frac{1}{2^{\nu-1}\Gamma(\nu)} \left(\sqrt{2\nu}\alpha r\right)^\nu K_\nu\left(\sqrt{2\nu}\alpha r\right)
\end{equation}
where $r=\normtwo{\bfx_i-\bfx_j}$, $\Gamma$ is the Gamma function, $K_\nu(\cdot)$ is the modified Bessel function of the second kind of order $\nu$, and $\alpha$ is a scaling factor. The choice of parameter $\nu$ in equation~\eqref{eqn:maternfamily} defines a special form for the covariance. For example, when $\nu=1/2$, $C_{\alpha,\nu}$ corresponds to the exponential covariance function, and if $\nu = 1/2+p$ where $p$ is a non-negative integer, $C_{\alpha,\nu}$ is the product of an exponential covariance and a polynomial of order $p$. Also, in the limit as $\nu\rightarrow\infty$, $C_{\alpha,\nu}$ converges to the Gaussian covariance kernel, for an appropriate scaling of $\alpha$. In Figure~\ref{fig:matern}, we provide examples of kernels from the Mat\'{e}rn covariance class, as well as realizations drawn from these covariance kernels. Another related family of covariance kernels is the $\gamma$-exponential function~\cite{rasmussen2006gaussian},
\begin{equation}
	\label{eq:gammaexp}
\kappa(r) \equiv \exp\left(-(r/\ell)^\gamma\right) \qquad 0< \gamma \leq 2.
\end{equation}

\begin{figure}[!ht]
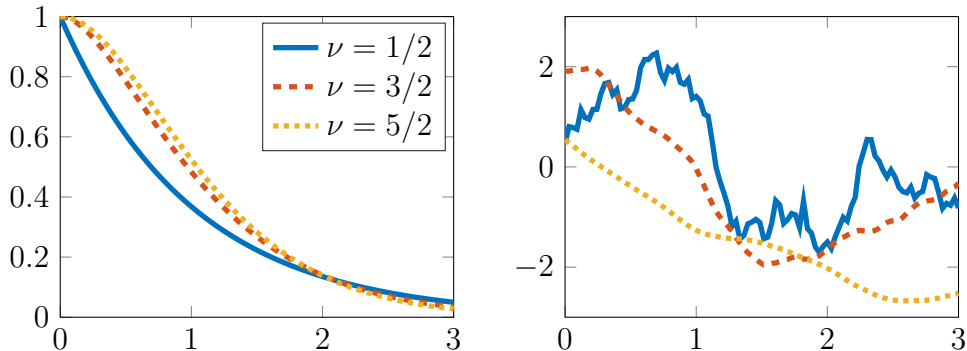

		\label{fig:matern}
	\begin{center}
		\iwidth=55mm
		\iheight=40mm
		\begin{tabular}{cc}
			\input{figs/matern1.tex} &
						\input{figs/matern2.tex}
						\end{tabular}
	\end{center}
	\caption{In the left image, we provide kernels from the Mat\'{e}rn covariance class where $\nu = 1/2, 3/2$, and $ 5/2$, and in the right image, we provide one realization drawn from each of these covariance kernels.}
\end{figure}

In general, both the storage and the computational cost for a MVP involving a dense prior covariance matrix is $\mc{O}(n^2)$ using the naive approach.  However, significant savings are possible for stationary or translational invariant covariance kernels; note that this includes both the Mat\'{e}rn and the $\gamma$-exponential covariance families. In particular, with points located on a regular equispaced grid, one can exploit the connection between the Fast Fourier Transform (FFT) and matrices with Toeplitz structure in 1D or block-Toeplitz structure in 2D, so that the cost per MVP can be reduced to $\mc{O} (n \log n)$~\cite{nowak2003efficient}. It can also be shown that for irregular grids, the cost for approximate MVPs involving the prior covariance matrix $\bfQ$ can be reduced to $\mc{O}(n\log n)$ using Hierarchical matrices~\cite{saibaba2012efficient} or $\mc{O}(n)$ using ${\cal{H}}^2$-matrices or the Fast Multipole Method (FMM)~\cite{ambikasaran2012large}. In this work, we will only use the FFT based approach.

\subsection{Change of variables}
\label{sub:changevar}
One common approach for computing the MAP estimate~\eqref{eqn:LS_orig} is to use an iterative solver on the normal equations~\eqref{eqn:normal}.  However, each iteration would require a MVP with $\bfQ^{-1}$, or equivalently, a linear solve with $\bfQ$, which can be costly. Another standard practice is to compute a Cholesky decomposition\footnote{It is worth noting that any symmetric factorization (e.g., an eigenvalue decomposition) could be used here.  Also, our choice of notation here is consistent with the literature.} $\bfQ^{-1} = \bfL_\bfQ\t \bfL_\bfQ$ and either use an iterative solver for the general-form Tikhonov problem~\eqref{eq:genTik} or use the following change of variables,
\begin{equation*}
\bfx \>\leftarrow  \> \bfL_\bfQ (\bfs-\boldsymbol{\mu}), \qquad \bfb \> \leftarrow  \> \bfd - \bfA\bfmu
\end{equation*}
to get a standard-form Tikhonov problem,
\begin{equation}
	\label{eqn:priorcondition}
	\min_{\bfx} \> \frac{1}{2} \normtwo{\bfL_\bfR(\bfA\bfL_\bfQ^{-1}\bfx-\bfb)}^2 + \frac{\lambda^2}{2}\normtwo{\bfx}^2\,.
\end{equation}
In the literature, this particular change of variables is referred to as \emph{priorconditioning}~\cite{calvetti2005priorconditioners,calvetti2007preconditioned,calvetti2007introduction} or transformation to standard form \cite{Hansen2010}.  Although this transformation can work well for some choices of $\bfQ$, obtaining $\bfL_\bfQ$ can be prohibitively expensive for prior covariance matrices $\bfQ$ arising from discrete representations of the Mat\'{e}rn class.  Furthermore, even for cases where $\bfQ^{-1}$ is sparse, computing the Cholesky decomposition can be expensive particularly for large-scale 3D problems, as argued in~\cite{arridge2014iterated}.

In order to avoid matrix factorizations of $\bfQ$ and/or expensive linear solves with $\bfQ$, we propose a different change of variables,
\begin{equation*}
\bfx \>\leftarrow  \> \bfQ^{-1}(\bfs-\bfmu), \qquad
 \bfb \> \leftarrow  \> \bfd - \bfA\bfmu\,,
\end{equation*}
so that equation~\eqref{eqn:normal} reduces to the modified system of equations
\begin{equation}\label{eqn:normal2}
(\bfA\t \bfR^{-1} \bfA \bfQ + \lambda^2 \bfI ) \bfx = \bfA\t \bfR^{-1}\bfb\,.
\end{equation}
In summary, with this change of variables, the MAP estimate is given by $\bfs_\lambda = \bfmu + \bfQ \bfx_\lambda$, where $\bfx_\lambda$ is the solution to the following least squares (LS) problem,
\begin{equation}
	\label{eqn:LS_tik_x}
	\min_\bfx \> \frac{1}{2}\|\bfA \bfQ \bfx -\bfb\|^2_{\bfR^{-1}} +\frac{ \lambda^2}{2}\|\bfx\|^2_\bfQ\,.
	\end{equation}
It is important to observe that the above change of variables removes both inverses and square roots with respect to $\bfQ$. Furthermore, in the Bayesian interpretation, we now have
\[ \bfb|\bfx \sim \mc{N}(\bfA\bfQ\bfx,\bfR^{-1}) \qquad \bfx \sim \mc{N}(\bfzero,\lambda^{-2}\bfQ^{-1}).\]

In Section~\ref{sec:genlsqr}, we describe an efficient hybrid iterative method for computing a solution to the transformed problem~\eqref{eqn:LS_tik_x}, where the regularization parameter $\lambda$ can be selected adaptively.  However, to provide some insight and motivation for the hybrid method, in the next subsection, we first use some tools from numerical linear algebra to analyze and interpret the MAP estimate as a filtered GSVD solution.

\subsection{Interpretation and analysis using the GSVD}\label{s_gsvd}
We begin by reviewing the GSVD and showing that the MAP estimate, $\bfs_\lambda = \bfmu+ \bfQ \bfx_\lambda$ where $\bfx_\lambda$ is the solution to~\eqref{eqn:LS_tik_x}, can be expressed in terms of a filtered GSVD solution. We use an alternate definition of the GSVD that was proposed in~\cite{van1976generalizing}, which is less commonly used in the inverse problems literature but relevant to our discussion.

Given two positive definite matrices $\bfQ$ and $\bfR$, Theorem 3 in~\cite{van1976generalizing} guarantees that there exist $\bfU_\bfR\in \mathbb{R}^{m\times m}$ satisfying $\bfU_\bfR\t\bfR^{-1}\bfU_\bfR = \bfI_m$ and $\bfV_\bfQ \in \mathbb{R}^{n\times n}$ satisfying $\bfV_\bfQ\t \bfQ^{-1} \bfV_\bfQ = \bfI_n$, such that
\begin{equation}
	\label{gsv}
	\bfU_\bfR^{-1}\bfA\bfV_\bfQ = \boldsymbol\Sigma_{\bfR\bfQ}\qquad \boldsymbol\Sigma_{\bfR\bfQ} = \diag{\hat{\sigma}_1,\dots,\hat{\sigma}_n}\,,
\end{equation} where the generalized singular values $\hat{\sigma}_j$ satisfy the following variational form,
\begin{equation}\label{e_gsv}
\hat{\sigma}_{j}(\bfA)  \equiv \min_{\{\mc{S}: \> \text{dim}\> \mc{S} = n - j + 1\}} \max_{\{\bfx:\> \bfx \neq \boldsymbol{0}, \bfx \in \mc{S} \}} \frac{\norm{\bfA\bfx}{\bfR^{-1}}}{\norm{\bfx}{\bfQ^{-1}}},
\end{equation}
where $\mc{S}$ is a subspace of $\mb{R}^n$.
Our analysis exploits the relationship between the GSVD of $\bfA$ and the SVD of priorconditioned matrix
$\widehat{\bfA} \equiv \bfL_\bfR \bfA\bfL_\bfQ^{-1}$, as summarized in the following result.
\begin{prop}
The generalized singular values of $\bfA$ defined in~\eqref{e_gsv} are the singular values of $\widehat{\bfA} \equiv \bfL_\bfR \bfA\bfL_\bfQ^{-1}$, and the left and right singular vectors of $\widehat{\bfA}$ are given by $\widehat{\bfU} = \bfL_\bfR \bfU_\bfR$ and $\widehat{\bfV} = \bfL_\bfQ^{-\top} \bfV_\bfQ^{-\top}$ respectively.
\end{prop}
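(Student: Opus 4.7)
The plan is to verify the proposition by explicitly constructing the singular value decomposition of $\widehat{\bfA}$ from the GSVD of $\bfA$. Starting from equation~\eqref{gsv}, left-multiply by $\bfU_\bfR$ and right-multiply by $\bfV_\bfQ^{-1}$ to obtain $\bfA = \bfU_\bfR \boldsymbol\Sigma_{\bfR\bfQ} \bfV_\bfQ^{-1}$. Substituting into the definition of $\widehat{\bfA}$ gives
\begin{equation*}
\widehat{\bfA} \;=\; \bfL_\bfR \bfU_\bfR \, \boldsymbol\Sigma_{\bfR\bfQ} \, \bfV_\bfQ^{-1} \bfL_\bfQ^{-1}
\;=\; \widehat{\bfU} \, \boldsymbol\Sigma_{\bfR\bfQ} \, \widehat{\bfV}\t,
\end{equation*}
with $\widehat{\bfU} := \bfL_\bfR \bfU_\bfR$ and $\widehat{\bfV} := \bfL_\bfQ^{-\top}\bfV_\bfQ^{-\top}$ (read off by matching $\widehat{\bfV}\t = \bfV_\bfQ^{-1}\bfL_\bfQ^{-1}$). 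To conclude that this is indeed the SVD of $\widehat{\bfA}$, it only remains to check that $\widehat{\bfU}$ and $\widehat{\bfV}$ are orthogonal; the diagonal factor $\boldsymbol\Sigma_{\bfR\bfQ}$ is already diagonal with nonnegative entries by construction.

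Orthogonality of $\widehat{\bfU}$ is immediate from $\bfR^{-1} = \bfL_\bfR\t\bfL_\bfR$ combined with the $\bfR^{-1}$-orthogonality of $\bfU_\bfR$:
\begin{equation*}
\widehat{\bfU}\t \widehat{\bfU} = \bfU_\bfR\t \bfL_\bfR\t \bfL_\bfR \bfU_\bfR = \bfU_\bfR\t \bfR^{-1} \bfU_\bfR = \bfI_m.
\end{equation*}
Orthogonality of $\widehat{\bfV}$ is slightly less transparent because of the transposes and inverses in its definition; this is the step that requires the most care. Using $\bfL_\bfQ^{-1}\bfL_\bfQ^{-\top} = (\bfL_\bfQ\t\bfL_\bfQ)^{-1} = \bfQ$ and the identity $\bfV_\bfQ^{-1} = \bfV_\bfQ\t \bfQ^{-1}$ (which follows by right-multiplying $\bfV_\bfQ\t \bfQ^{-1}\bfV_\bfQ = \bfI_n$ by $\bfV_\bfQ^{-1}$), one gets
\begin{equation*}
\widehat{\bfV}\t \widehat{\bfV} = \bfV_\bfQ^{-1} \bfL_\bfQ^{-1}\bfL_\bfQ^{-\top} \bfV_\bfQ^{-\top} = \bfV_\bfQ^{-1} \bfQ \bfV_\bfQ^{-\top} = \bfV_\bfQ\t \bfQ^{-1}\bfQ\bfQ^{-1}\bfV_\bfQ = \bfV_\bfQ\t \bfQ^{-1} \bfV_\bfQ = \bfI_n.
\end{equation*}

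With both factors $\widehat{\bfU}$ and $\widehat{\bfV}$ orthogonal, the displayed factorization is the SVD of $\widehat{\bfA}$, so its singular values are exactly the diagonal entries $\hat\sigma_j$ of $\boldsymbol\Sigma_{\bfR\bfQ}$, and its left/right singular vectors are the columns of $\widehat{\bfU}$ and $\widehat{\bfV}$ as claimed. If desired, the variational characterization~\eqref{e_gsv} can be cross-checked directly via the change of variables $\bfy = \bfL_\bfQ\bfx$: it sends the Rayleigh-type quotient $\|\bfA\bfx\|_{\bfR^{-1}}/\|\bfx\|_{\bfQ^{-1}}$ to $\|\widehat{\bfA}\bfy\|_2/\|\bfy\|_2$ and is a bijection on subspaces of dimension $n-j+1$, so Courant--Fischer applied to $\widehat{\bfA}$ reproduces~\eqref{e_gsv}. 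The main subtlety in the whole argument is purely bookkeeping: correctly inverting and transposing $\bfV_\bfQ$ and $\bfL_\bfQ$ to see that the proposed right singular vector matrix $\widehat{\bfV}$ is indeed orthogonal.
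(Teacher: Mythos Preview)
Your proof is correct and follows essentially the same approach as the paper: construct the factorization $\widehat{\bfA} = \widehat{\bfU}\,\boldsymbol\Sigma_{\bfR\bfQ}\,\widehat{\bfV}\t$ by direct multiplication, then note the variational characterization via the change of variables $\boldsymbol\zeta = \bfL_\bfQ\bfx$. The paper's version is terser (it simply asserts that $\widehat{\bfU}$ and $\widehat{\bfV}$ are orthogonal), whereas you supply the explicit verification of orthogonality; this added detail is helpful but does not constitute a different argument.
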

\begin{proof}
Straightforward multiplication reveals that $\widehat\bfA = \widehat \bfU \bfSigma_{\bfR \bfQ}\widehat \bfV\t$, where orthogonal matrices $\widehat \bfU$ and $\widehat \bfV$ are defined as above.  Another proof of equivalence for the singular values is to use the change of variables $\boldsymbol\zeta \leftarrow \bfL_\bfQ\bfx$ and the fact that $\norm{\bfA\bfx}{\bfR^{-1}} = \normtwo{\bfL_\bfR\bfA\bfx}$ in~\eqref{e_gsv} to get
\begin{equation}
\hat{\sigma}_{j}(\bfA)  = \min_{\{\mc{S}: \> \text{dim}\> \mc{S} = n - j + 1\}} \max_{\{\boldsymbol\zeta :\> \boldsymbol\zeta \neq 0, \boldsymbol\zeta \in \mc{S} \}} \frac{\normtwo{\bfL_\bfR \bfA\bfL_\bfQ^{-1}\boldsymbol\zeta}}{\normtwo{\boldsymbol\zeta}},
\end{equation}
where the conclusion follows from the variational characterization of singular values; see~\cite[Theorem 7.3.8]{horn2012matrix}.
\end{proof}

If $\bar\bfu_j$ and $\bar\bfv_j$ denote the columns of $\bfU_\bfR$ and $\bfV_\bfQ$ respectively, then it can be shown that the MAP solution can be written as a filtered GSVD solution,
\begin{equation}
	\label{eq:filtGSVD}
\bfs_\lambda \> = \> \bfmu + \sum_{j=1}^n \hat\phi_j \frac{\bar\bfu_j\t\bfR^{-1}\bfb}{\hat{\sigma}_j} \bar\bfv_j \,,
\end{equation}
where the generalized Tikhonov filter factors are given by
\[ \hat\phi_j \> \equiv \>\frac{\hat{\sigma}_j^2}{ \hat{\sigma}_j^2 + \lambda^2} \qquad j=1,\dots,n. \]
Other spectral filtering techniques can be considered as well, such as the
truncated GSVD, where for some $1 \leq k \leq n$, the generalized filter factors are given by,
\[ \hat\phi_j \> \equiv \> \left\{ \begin{array}{cc} 1 & 1 \leq j \leq k \\ 0 & j >k \end{array}\right.  \qquad j=1,\dots,n \,.\]
Note that the MAP solution can be alternatively written as
\begin{equation}\label{e_map_gsvd}
\bfs_\lambda \> = \> \bfmu + {\bfV}_\bfQ \B{\Phi\Sigma}^{\dagger}_{\bfR \bfQ}\bfg\,,
\end{equation}
where $\bfg \equiv {\bfU}^T_\bfR\bfR^{-1}\bfb$, and $\B{\Phi} = \diag{\hat\phi_1, \dots, \hat\phi_n}$.

A new equivalence relationship between the solution from our proposed generalized hybrid approach and a filtered GSVD solution will be proved in Section~\ref{sec:connections}, so, for completeness, we provide a brief discussion regarding the discrete Picard plot for GSVD and point the interested reader to \cite{Hansen2010} for more details.
We use a 1D inverse heat example from \cite{hansen1994regularization} and define $\bfmu=\bfzero$, $\bfR = \bfI$, and prior covariance matrix $\bfQ$ to represent a kernel from the Mat\`{e}rn covariance family~\eqref{eq:gammaexp} where $\nu = 1/2$ and $\alpha=2$.
Discrete Picard plots corresponding to the SVD and GSVD of $\bfA$ are provided in Figure~\ref{fig:dpc}, where the generalized singular values, $\hat\sigma_j$, decay faster than the singular values of $\bfA$.  Although such analytical tools are not feasible for realistic problems of interest since computing the GSVD is prohibitive, the significance of this observation is that we can expect iterative methods applied to the prior-conditioned problem to converge faster, if the singular values of $\widehat\bfA$ decay faster than those of $\bfA$.
In the next section, we derive an efficient Krylov subspace solver for estimating solutions to~\eqref{eqn:LS_tik_x} that only requires MVPs and allows for automatic regularization parameter selection.

\begin{figure}[tb!]
	\begin{center}
		\iwidth=120mm
		\iheight=50mm
			\input{figs/dpc.tex}
	\end{center}
	\caption{Discrete Picard plots.	Notice that the generalized singular values decay faster than the singular values of $\bfA$.  Absolute values of the SVD coefficients, $|\bfu_j\t \bfb|$, and GSVD coefficients, $|\bar\bfu_j\t \bfb|,$ decay initially, but stabilize at the noise level (here, $10^{-6}$).  Absolute values of the solution coefficients decrease initially but eventually increase due to noise contamination.
	}
	\label{fig:dpc}
\end{figure}

\section{Iterative methods based on generalized Golub-Kahan bidiagonalization}\label{sec:genlsqr}
In this section, we describe an iterative solver for LS problem~\eqref{eqn:LS_tik_x} that is based on the generalized Golub-Kahan (gen-GK) bidiagonalization \cite{arioli2013generalized}. Such iterative methods are desirable for problems where matrices $\bfA$ and $\bfQ$ may be so large that they cannot be explicitly stored, but they can be accessed via function calls.  We first describe the gen-GK process and highlight relevant properties and connections.  Then, we describe how to solve~\eqref{eqn:LS_tik_x} efficiently by exploiting the gen-GK relationships for the case where $\lambda$ is provided a priori.  However, for problems where a good choice is not available, we describe a hybrid approach where sophisticated regularization parameter selection methods can be used on the projected problem to estimate $\lambda$ at each iteration.  Similar to previously-studied hybrid methods, our approach assumes that solutions can be captured in relatively few iterations or that preconditioning can be used so that the required number of iterations remains small.  This is typically the case for ill-posed problems, but even more relevant here due to the GSVD analysis in Figure~\ref{fig:dpc}, where the generalized singular values tend to decay faster than the singular values of $\bfA$.

\subsection{Generalized Golub-Kahan bidiagonalization}

The discussion in this section is closely related to the approach in~\cite{arioli2013generalized}. The major difference is that the generated Krylov subspace is different due to the change of variables described in Section~\ref{sub:changevar}.

Given matrices $\bfA$, $\bfR$, $\bfQ$, and vector $\bfb,$ with initializations $\beta_1 = \norm{\bfb}{\bfR^{-1}}, \bfu_1 = \bfb/\beta_1$ and $\alpha_1 \bfv_1 = \bfA\t \bfR^{-1} \bfu_1$, the $k$th iteration of the gen-GK bidiagonalization procedure generates vectors $\bfu_{k+1}$ and $\bfv_{k+1}$ such that
\begin{align*}
	\beta_{k+1} \bfu_{k+1} & = \bfA \bfQ \bfv_k -\alpha_k \bfu_k\\
		\alpha_{k+1} \bfv_{k+1} & = \bfA\t \bfR^{-1} \bfu_{k+1} -\beta_{k+1} \bfv_k,
	\end{align*}
where scalars $\alpha_i, \beta_i \geq 0$ are chosen such that $\norm{\bfu_i}{\bfR^{-1}} = \norm{\bfv_i}{\bfQ} = 1$. At the end of $k$ steps, we have
\[ \bfB_k \equiv \> \begin{bmatrix}
\alpha_1 \\ \beta_2 & \alpha_2 \\ & \beta_3 & \ddots \\ & & \ddots & \alpha_k \\ & & & \beta_{k+1}
\end{bmatrix}\,,  \qquad \bfU_{k+1} \equiv [\bfu_1,\dots,\bfu_{k+1}],\quad \mbox{and} \quad \bfV_k \equiv [\bfv_1,\dots,\bfv_k],\]
where the following relations hold up to machine precision,
\begin{align}\label{e_bk}
\bfU_{k+1}\beta_1 \bfe_1  =  &\> \bfb \\ \label{e_vk}
\bfA \bfQ \bfV_k = & \>\bfU_{k+1} \bfB_k \\ \label{e_uk}
\bfA\t \bfR^{-1} \bfU_{k+1} = & \> \bfV_k \bfB_k\t + \alpha_{k+1}\bfv_{k+1}\bfe_{k+1}\t\,.
\end{align}
Furthermore, in exact arithmetic, matrices $\bfU_{k+1}$ and $\bfV_k$ satisfy the following orthogonality conditions
\begin{equation}
	\label{eq:orthog}
	\bfU_{k+1}\t \bfR^{-1} \bfU_{k+1} = \bfI_{k+1} \qquad \mbox{and} \qquad \bfV_k\t \bfQ \bfV_k = \bfI_k.
\end{equation}

An algorithm for the gen-GK bidiagonalization process is provided in Algorithm~\ref{alg:wlsqr}. In addition to MVPs with $\bfA$ and $\bfA\t$ that are required for the standard GK bidiagonalization \cite{GoKa65}, each iteration of gen-GK bidiagonalization requires two MVPs with $\bfQ$ and two solves with $\bfR$ (which are simple); in particular, we emphasize that Algorithm~\ref{alg:wlsqr} avoids $\bfQ^{-1}$ and $\bfL_\bfQ$, due to the change of variables described in Section~\ref{sub:changevar}.

\begin{algorithm}[!h]
\begin{algorithmic}[1]
\REQUIRE Matrices $\bfA$, $\bfR$ and $\bfQ$, and vector $\bfb$.
\STATE $\beta_1 \bfu_1 = \bfb,$ where $\beta_1 = \norm{\bfb}{\bfR^{-1}}$
\STATE $\alpha_1 \bfv_1 = \bfA\t \bfR^{-1}\bfu_1$
\FOR {i=1, \dots, k}
\STATE $\beta_{i+1}\bfu_{i+1} = \bfA\bfQ\bfv_i - \alpha_i \bfu_i$, where $\beta_{i+1} = \norm{\bfA\bfQ\bfv_i - \alpha_i \bfu_i}{\bfR^{-1}}$
\STATE $\alpha_{i+1}\bfv_{i+1} = \bfA\t \bfR^{-1} \bfu_{i+1} - \beta_{i+1} \bfv_i$, where $\alpha_{i+1} = \norm{\bfA\t \bfR^{-1} \bfu_{i+1} - \beta_{i+1} \bfv_i}{\bfQ}$
\ENDFOR
\end{algorithmic}
\caption{generalized Golub-Kahan (gen-GK) bidiagonalization}
\label{alg:wlsqr}
\end{algorithm}

\paragraph{Remarks on gen-GK bidiagonalization}
Recall that the $k$-dimensional Krylov subspace associated with matrix $\bfC$ and vector $\bfg$ is defined as
\[\krylov{k}(\bfC,\bfg) \equiv \text{Span}\{\bfg,\bfC\bfg,\dots,\bfC^{(k-1)} \bfg \}.\]
It is well known that at the $k$--th iteration of the standard GK bidiagonalization process, orthonormal columns $\bfv_i, i = 1, ..., k$ span the $k$-dimensional Krylov subspace $\krylov{k}(\bfA\t\bfA,\bfA\t\bfb)$ \cite{GoKa65}.  For gen-GK, it can be shown using the relations in~\eqref{e_bk}-\eqref{e_uk} that the columns of $\bfU_k$ and $\bfV_k$  respectively form  $\bfR^{-1}$-orthogonal and $\bfQ$-orthogonal (c.f. Equation~\eqref{eq:orthog}) bases for the following Krylov subspaces:
\begin{align}
\Span{\bfU_k} \> = & \> \krylov{k}(\bfA\bfQ\bfA\t\bfR^{-1}, \bfb)\\
\Span{\bfV_k} \> = & \> \krylov{k}(\bfA\t \bfR^{-1}\bfA\bfQ, \bfA\t\bfR^{-1}\bfb)
\end{align}
respectively.
A useful property of Krylov subspaces is that they are shift-invariant, i.e., $\krylov{k}(\bfC,\bfg) = \krylov{k}(\bfC+\lambda^2 \bfI,\bfg)$ for any $\lambda$. In particular, this implies that
\[ \krylov{k}(\bfA\t\bfR^{-1}\bfA\bfQ, \bfv_1) = \krylov{k}(\bfA\t\bfR^{-1}\bfA\bfQ + \lambda^2\bfI, \bfv_1),\]
and consequently by combining~\eqref{e_vk} and~\eqref{e_uk}, we get
\begin{align}\nonumber
 \left(\bfA\t\bfR^{-1}\bfA\bfQ + \lambda^2 \bfI_n \right)\bfV_k = & \quad \bfV_k(\bfB_k\t\bfB_k + \lambda^2 \bfI_k) + \alpha_{k+1}\bfv_{k+1}\bfe_k\t \bfB_k \\ \label{eqn:shift}
 = &  \quad \bfV_{k+1}\left(\bar{\bfB}_k + \lambda^2\begin{bmatrix}\bfI_k \\ \bfzero_{1\times k}\end{bmatrix}\right),
\end{align}
where
\begin{equation}
	\label{eqn:lsmr}
	\bar{\beta}_k  \equiv \beta_k\alpha_k \qquad \mbox{and} \qquad \bar{\bfB}_k\equiv \>  \begin{bmatrix} \bfB_k\t \bfB_k\\ \bar{\beta}_{k+1}\bfe_k\t\end{bmatrix}.
	\end{equation}
We will see in Section~\ref{s_param} that the shift-invariant property of Krylov subspaces is essential for choosing regularization parameters reliably and efficiently during the iterative process.  In fact, our approach creates a basis for the Krylov subspace $\krylov{k}(\bfA\t\bfR^{-1}\bfA\bfQ, \bfv_1)$ that is independent of $\lambda$, so that the choice of regularization parameter can be done adaptively.  This distinguishes us from approaches that use iterative Krylov methods to solve the equivalent general-form Tikhonov problem,
\begin{equation*}
	\min_\bfs\> \frac{1}{2}\left\|\begin{bmatrix}
		\bfL_\bfR\bfA \\ \lambda \bfL_\bfQ
	\end{bmatrix} \bfs - \begin{bmatrix}
		\bfL_\bfR \bfd \\ \bfL_\bfQ \bfmu
	\end{bmatrix}\right\|_2^2\,,
\end{equation*}
in which case, the Krylov subspace depends on the choice of $\lambda$ and the regularizing iteration property is not straightforward \cite{Hansen2010,HaHa93}.  Hybrid iterative methods could be used to solve the priorconditioned problem~\eqref{eqn:priorcondition}, but that approach would require $\bfL_\bfQ.$

\subsection{Solving the LS problem}
In this section, we seek approximate solutions to~\eqref{eqn:LS_tik_x} by using the gen-GK relations above to obtain a sequence of projected LS problems.  For clarity of presentation in this subsection, we assume $\lambda$ is fixed and drop the subscript.  In particular, we seek solutions of the form $\bfx_{k}  = \bfV_k \bfz_{k} $, so that
\[ \bfx_k \in \text{Span}\{\bfV_k\} = \krylov{k}(\bfA\t\bfR^{-1}\bfA\bfQ,\bfA\t\bfR^{-1} \bfb) \equiv \mc{S}_k. \]
Define the residual at step $k$ as $\bfr_k \equiv \>  \bfA\bfQ\bfx_k-\bfb$. It follows from Equations~\eqref{e_bk}-\eqref{e_uk} that
{\[ \bfr_k \equiv \>  \bfA\bfQ\bfx_k-\bfb = \bfU_{k+1} \left( \bfB_k \bfz_k - \beta_1 \bfe_1\right), \]}
and furthermore,
\[ \bfA\t\bfR^{-1}\bfr_k  = \bfV_{k+1} (\bar{\bfB}_k\bfz_k - \bar{\beta}_1\bfe_1).\]
We consider two approaches for obtaining coefficients $\bfz_k$.  We propose to either take $\bfz_k$ that minimizes the gen-LSQR problem,
\begin{equation}\label{e_wlsqr}\min_{\bfx_k \in \mc{S}_k } \>\frac{1}{2}\norm{\bfr_{k}}{\bfR^{-1}}^2+\frac{\lambda^2}{2}\norm{\bfx_{k}}{\bfQ}^2\quad \Leftrightarrow \quad \min_{\bfz_k \in \mb{\bfR}^k} \> \frac{1}{2}\normtwo{ \bfB_k \bfz_k-\beta_1 \bfe_1 }^2 + \frac{\lambda^2}{2} \normtwo{\bfz_k}^2,
\end{equation}
or take $\bfz_k$ to minimize the gen-LSMR problem,
\begin{equation}\label{e_wlsmr} \min_{\bfx_k \in \mc{S}_k} \>\frac{1}{2}\norm{\bfA\t\bfR^{-1}\bfr_{k}}{\bfQ}^2+\frac{\lambda^2}{2}\norm{\bfx_{k}}{\bfQ}^2\quad \Leftrightarrow \quad \min_{\bfz_k\in \mb{R}^k } \> \frac{1}{2}\normtwo{  \bar{\bfB}_k\bfz_k -\bar{\beta}_1 \bfe_1 }^2 + \frac{\lambda^2}{2} \normtwo{\bfz_k}^2\,,  \end{equation}
where the gen-GK relations were used to obtain the equivalences.
These approaches are motivated by standard LSQR \cite{paige1982lsqr,PaSa82b} and LSMR \cite{fong2011lsmr}.

After computing a solution to the projected problem, an approximate solution to the original problem~\eqref{eqn:LS_orig} can be recovered by undoing the change of variables,
\begin{equation}\label{eqn:undo_change}
\bfs_{k} =  \bfmu + \bfQ \bfx_{k}  = \bfmu + \bfQ\bfV_k\bfz_{k}\,,
\end{equation}
where, now, $\bfs_{k} \in \bfmu + \bfQ\mc{S}_k$.

\subsection{A generalized hybrid approach}\label{s_param}
Thus far, we have assumed that regularization parameter $\lambda$ is provided a priori.  However, obtaining a good regularization parameter can be difficult, especially for extremely large-scale problems, where it may be necessary to solve many systems for different $\lambda$ values.  In this section, we take advantage of the shift-invariance property of Krylov subspaces and propose a generalized hybrid approach, where regularization parameters can be estimated during the iterative process.

The basic idea is to use well-studied, sophisticated regularization parameter selection schemes \cite{Hansen2010,HaHa93,Vogel2002} on projected problems~\eqref{e_wlsqr} and~\eqref{e_wlsmr}, where for small values of $k,$ matrices $\bfB_k$ and $\bar{\bfB}_k$ are only of size $(k+1) \times k.$  This approach is not novel, with previous work on parameter selection within hybrid LSQR and LSMR methods including \cite{ChNaOLe08,chung2015hybrid,Kilmer2001,gazzola2015krylov,renaut2010regularization}. However, to the best of our knowledge, no one has looked at hybrid methods based on the generalized Golub-Kahan bidiagonalization, where the novelty is that we propose a hybrid variant that is ideal for problems where $\bfQ$ is modeled and applied directly, without the need for its inverse or square root.

Although a wide range of regularization parameter methods can be used in our framework, in this paper we consider the generalized cross validation (GCV) approach, the discrepancy principle (DP), and the unbiased predictive risk estimator (UPRE) approach. As a benchmark, all of the computed parameters will be compared to the optimal regularization parameter $\lambda_{\rm opt}$, which minimizes the $2$-norm of the error between the reconstruction and the truth. In the following, we describe GCV as a means to select regularization parameters and refer the reader to the Appendix for details regarding the use of DP and UPRE in a hybrid framework.

GCV \cite{GoHeWa79} is a statistical technique that is based on a leave-one-out approach.  The GCV parameter, $\lambda_{\rm gcv}$, is selected to minimize the GCV function corresponding to the general-form Tikhonov problem~\eqref{eq:genTik},
\begin{equation}\label{e_gcv}
G(\lambda) = \> \frac{n \norm{ \bfA \bfs_\lambda - \bfd}{\bfR^{-1}}^2}{\left[\trace(\bfI_m-\bfL_\bfR \bfA \bfA_\lambda^{\dagger})\right]^2}\,,
\end{equation}
where $\bfA_\lambda^{\dagger} = (\bfA\t\bfR^{-1}\bfA + \lambda^2 \bfQ^{-1})^{-1}\bfA\t \bfL_\bfR\t$. We have assumed $\bfmu=\bfzero$ for simplicity.  The GCV function can be simplified by using the GSVD defined in Section~\ref{s_gsvd} as
\begin{equation*}
G(\lambda) = \> \frac{n \left( \displaystyle \sum_{i=1}^n \left(\frac{\lambda^2 g_i}{\hat\sigma_i^2 + \lambda^2}\right)^2 + \sum_{i=n+1}^m g_i^2 \right)}
{\displaystyle \left(m-n+ \sum_{i=1}^n \frac{\lambda^2}{\hat{\sigma}_i^2 + \lambda^2} \right)^2}\,,
\end{equation*}
where $\hat{\sigma}_i$ are the generalized singular values and $g_i$ is the $i$th element of $\bfg = \bfU_\bfR\t \bfR^{-1}\bfb$. Since the GSVD is often not available in practice, a general approach is to use the GCV function corresponding to the projected problem~\eqref{e_wlsqr}, where the GCV parameter at the $k$th iteration minimizes,
\begin{equation}\label{e_gcv_proj}
 G_\text{proj}(\lambda) \equiv \> \frac{k \normtwo{(\bfI - \bfB_k \bfB_{k,\lambda}^\dagger)\beta_1\bfe_1}^2}{\left[\trace(\bfI_{k+1} - \bfB_k\bfB_{k,\lambda}^\dagger)\right]^2 },
\end{equation}
where $\bfB_{k,\lambda}^\dagger = (\bfB_k\t\bfB_k+\lambda^2 \bfI)^{-1} \bfB_k\t$.
A weighted-GCV (WGCV) approach \cite{ChNaOLe08} has been suggested for use within hybrid methods, where a weighting parameter is introduced in the denominator of~\eqref{e_gcv_proj}.  We denote $\lambda_{\rm wgcv}$ to be the regularization parameter computed using WGCV. Recent insights \cite{renaut2015hybrid} into the choice of weighting parameter as well as connections between the GCV function for the original versus projected problem extend naturally to this framework.

We also remark that analogous methods can be derived for gen-LSMR, where the projected problem is given in~\eqref{e_wlsmr}. In this case, we replace $\bfB_k$ with $\widehat{\bfB}_k$, and $\beta_1$ with $\bar{\beta}_1$, which yield equivalent expressions for the projected problem. This will not be discussed in great detail and the interested reader is referred to~\cite{chung2015hybrid}.

Finally, we briefly comment on stopping criteria for the gen-GK process in a generalized hybrid approach.  In particular, we follow the approaches described in \cite{ChNaOLe08,chung2015hybrid}, where the iterative process is terminated if a maximum number of iterations is attained, a GCV function defined in terms of the iteration, $G(k)$, attains a minimum or flattens out, or tolerances on the residual are attained. Other parameter estimation techniques yield similar stopping criteria.

\section{Theory and connections} 
\label{sec:connections}

\subsection{Interpreting gen-LSQR iterates as filtered GSVD solutions}

In Section~\ref{s_gsvd}, we used the GSVD to express the MAP estimate, $\bfs_\lambda$, as a filtered GSVD solution. In the following, we exploit the connection between gen-LSQR iterates and filtered GSVD solutions, following the approach presented in~\cite{jensen2007iterative}, to provide insight into the suitability of gen-LSQR for computing regularized solutions.

Let $\lambda$ be fixed. Then using the GSVD~\eqref{gsv}, we can express the Krylov subspace for the solution at the $k$--th iteration as
\[ \krylov{k}(\bfA^\top\bfR^{-1}\bfA\bfQ + \lambda^2 \bfI_n,\bfA^\top\bfR^{-1}{\bfb}) = \Span{{\bfV}_\bfQ^{-\top}({\bfSigma}_{\bfR\bfQ}\t {\bfSigma}_{\bfR\bfQ} + \lambda^2 \bfI_n)^{q}\B{\Sigma}_{\bfR\bfQ}\t\bfg}_{q=0}^{k-1}\,. \]
Thus, the $k$th iterate of gen-LSQR can be expressed as
\[\bfx_{k} =  \bfQ^{-1}{\bfV}_\bfQ{\Phi}_k\boldsymbol\Sigma^{\dagger}_{\bfR\bfQ}\bfg, \qquad \mbox{where} \qquad \boldsymbol{\Phi}_k \equiv {\mc{P}}_k(\bfSigma_{\bfR\bfQ}\t \bfSigma_{\bfR\bfQ}+\lambda^2\bfI_n)\bfSigma_{\bfR\bfQ}\t \bfSigma_{\bfR\bfQ}, \]
$ {\mc{P}}_k(\cdot)$ is a polynomial of degree $\leq k-1$ and $\bfV_\bfQ^{-\top} = \bfQ^{-1}\bfV_\bfQ$.  Thus, the approximation to the MAP estimate at the $k$--th iteration is given as
\begin{equation}\label{eqn:krylov_trans} \bfs_{k} = \B{\mu} +  {\bfV}_\bfQ\B{\Phi}_k\boldsymbol\Sigma^{\dagger}_{\bfR\bfQ}\bfg\,.\end{equation}
In summary, we have shown that the MAP approximations generated by gen-LSQR can be expressed as filtered GSVD solutions, where the filter factors are defined by the polynomial $\mc{P}_k$. Compare this with the filtered GSVD solution in~\eqref{e_map_gsvd}. From the analysis in Section~\ref{s_gsvd} (in particular, see Figure~\ref{fig:dpc}), we expect the problem to satisfy the discrete Picard condition. Therefore, in general, we expect the early gen-LSQR iterates to have a better representation of the generalized singular vectors corresponding to the dominant generalized singular values.  Although the gen-LSMR solution is obtained using a different subproblem, gen-LSMR solutions lie within the same subspace as gen-LSQR solutions, and thus, can also be interpreted as filtered GSVD solutions but with different filter factors.

\subsection{Connection to iterative methods with $\bfQ$-orthogonality}
Next, for fixed $\lambda$, we establish equivalences
between our proposed solvers and pre-existing solvers, namely Conjugate Gradients (CG) and MINRES with $\bfQ$-orthogonality for solving~\eqref{eqn:normal2}\cite{saad2003iterative}. Before proceeding, notice that $ \bfM \equiv \bfA\t \bfR^{-1} \bfA \bfQ$ is symmetric with respect to the $\bfQ$-inner product, $\langle \bfx,\bfy\rangle_\bfQ = \bfy\t \bfQ \bfx$. This follows since
	\[ \langle \bfM \bfx,\bfy\rangle_\bfQ = \bfy\t \bfQ \bfA\t \bfR^{-1} \bfA \bfQ \bfx = \langle \bfx,\bfM \bfy\rangle_\bfQ. \]
This observation is crucial in the following theorem.

\begin{theorem}\label{t_lsqr_cg} The gen-LSQR solution after $k$ iterations (i.e., minimizer of~\eqref{e_wlsqr}) is mathematically equivalent to the solution obtained by performing $k$ steps of CG with $\bfQ$-orthogonality on~\eqref{eqn:normal2};
similarly, the $k$th gen-LSMR solution (i.e., minimizer of~\eqref{e_wlsmr}) is mathematically equivalent to the solution obtained by performing $k$ steps of MINRES with $\bfQ$-orthogonality on~\eqref{eqn:normal2}. \end{theorem}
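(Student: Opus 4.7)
The plan is to reduce the generalized setting to the classical one by composing the priorconditioning change of variables with the well-known Paige--Saunders (LSQR $\equiv$ CG) and Fong--Saunders (LSMR $\equiv$ MINRES) equivalences, then to observe that pulling the standard Euclidean inner product back through that change of variables produces exactly the $\bfQ$-inner product.

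First, I would verify that the gen-GK process in Algorithm~\ref{alg:wlsqr} is a relabeling of the standard Golub--Kahan bidiagonalization of $\widehat\bfA = \bfL_\bfR\bfA\bfL_\bfQ^{-1}$ with right-hand side $\widehat\bfb = \bfL_\bfR\bfb$. By induction on $i$, one checks that the scalars $\alpha_i,\beta_i$ coincide and that $\widehat\bfu_i = \bfL_\bfR \bfu_i$ and $\widehat\bfv_i = \bfL_\bfQ^{-\top}\bfv_i$, using $\bfR^{-1}=\bfL_\bfR\t\bfL_\bfR$ together with $\bfQ = \bfL_\bfQ^{-1}\bfL_\bfQ^{-\top}$, and the fact that the gen-GK normalizations $\|\bfu_i\|_{\bfR^{-1}}=\|\bfv_i\|_\bfQ=1$ become the standard Euclidean normalizations $\|\widehat\bfu_i\|_2=\|\widehat\bfv_i\|_2=1$. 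Hence $\bfB_k$ and $\bar\bfB_k$ are common to both processes, and the projected problems~\eqref{e_wlsqr} and~\eqref{e_wlsmr} agree with those produced by standard LSQR and LSMR, respectively, on the priorconditioned Tikhonov problem~\eqref{eqn:priorcondition}. The iterates are related by $\bar\bfx_k = \bfL_\bfQ\t\widehat\bfx_k$.

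Second, I would invoke the classical results: the Paige--Saunders equivalence gives that standard LSQR on~\eqref{eqn:priorcondition} produces the same iterate as standard CG on the priorconditioned normal equations $(\widehat\bfA\t\widehat\bfA + \lambda^2\bfI)\bfx = \widehat\bfA\t\widehat\bfb$, and Fong--Saunders gives the analogous LSMR/MINRES equivalence on that same symmetric system. Left-multiplying that system by $\bfL_\bfQ\t$ and substituting $\widehat\bfx = \bfL_\bfQ^{-\top}\bar\bfx$ reduces it algebraically to~\eqref{eqn:normal2}. Moreover, the Euclidean inner product in $\widehat\bfx$-space pulls back to the $\bfQ$-inner product in $\bar\bfx$-space, since $\widehat\bfx\t\widehat\bfy = \bar\bfx\t\bfL_\bfQ^{-1}\bfL_\bfQ^{-\top}\bar\bfy = \langle\bar\bfx,\bar\bfy\rangle_\bfQ$. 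Therefore standard CG in $\widehat\bfx$-coordinates is, verbatim, CG-with-$\bfQ$-orthogonality in $\bar\bfx$-coordinates, and similarly for MINRES; concatenating with the first step establishes both equivalences.

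The main obstacle will be handling the LSMR half carefully. The gen-LSQR equivalence is transparent because both the hybrid approach (regularize the projected problem) and the full approach (regularize the full problem, then project) minimize the same quadratic and produce identical iterates, so there is no ambiguity in what ``CG on~\eqref{eqn:normal2}'' means; in particular, expanding the gen-LSQR objective yields $\tfrac12\langle(\bfM+\lambda^2\bfI)\bar\bfx,\bar\bfx\rangle_\bfQ - \langle\bfc,\bar\bfx\rangle_\bfQ + \text{const}$, which is precisely the CG energy in the $\bfQ$-inner product. For LSMR the two approaches in general give different iterates, and matching the projected formulation~\eqref{e_wlsmr} to the residual minimized by MINRES on~\eqref{eqn:normal2} requires the particular Tikhonov-regularized form of LSMR worked out in~\cite{fong2011lsmr,chung2015hybrid}, together with the shift-invariance identity~\eqref{eqn:shift} to reconcile the tridiagonalization of $\bfM+\lambda^2\bfI$ with the bidiagonal $\bar\bfB_k$. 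Everything else --- the gen-GK $\equiv$ standard-GK verification and the inner-product pullback --- is essentially bookkeeping once the framework is in place.
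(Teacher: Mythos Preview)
Your approach is correct but takes a genuinely different route from the paper. The paper argues directly in the $\bfQ$-inner-product space: it observes that both gen-GK and the $\bfQ$-Lanczos process for $\bfM=\bfA\t\bfR^{-1}\bfA\bfQ$ build $\bfQ$-orthonormal bases for the same Krylov subspace $\krylov{k}(\bfM,\bfA\t\bfR^{-1}\bfb)$, so the two bases are related by an orthogonal matrix $\bfZ$; it then shows $\bfT_k=\bfZ\t\bfB_k\t\bfB_k\bfZ$ and $\bar\bfT_k\t\bar\bfT_k=\bfZ\t\bar\bfB_k\t\bar\bfB_k\bfZ$, and checks that the closed-form iterates agree. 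Your argument instead factors through the priorconditioned problem: you identify gen-GK with standard GK on $(\widehat\bfA,\widehat\bfb)$, invoke the classical Paige--Saunders and Fong--Saunders equivalences there, and then pull Euclidean CG/MINRES back to $\bfQ$-weighted CG/MINRES via $\widehat\bfx=\bfL_\bfQ^{-\top}\bar\bfx$. This is more conceptual and has the pleasant side effect of simultaneously recovering the paper's later Theorem on equivalence with priorconditioned LSQR (your Step~1 is essentially that result). The paper's proof, by contrast, is self-contained and never introduces $\bfL_\bfQ$, which is in keeping with the paper's theme that the method should be formulated without square roots of $\bfQ$. Your caution about the LSMR half is well placed: the paper's ``MINRES on~\eqref{eqn:normal2}'' is the hybrid form (regularize the projected $\bar\bfT_k$-problem), not MINRES applied to the already-shifted operator $\bfM+\lambda^2\bfI$, and your pullback argument matches that interpretation exactly once you use the Tikhonov-regularized LSMR of~\cite{fong2011lsmr,chung2015hybrid}.
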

\begin{proof}
First, notice that at the $k$--th step of CG with $\bfQ$-inner products applied to~\eqref{eqn:normal2}, we have matrices $\bfV_k^\text{L}$ and $\bfT_k$ that satisfy the following Lanczos relationship
\begin{equation}
	\label{eq:Lan}
	\bfM\bfV_k^\text{L} = \bfV_{k+1}^\text{L}\begin{bmatrix} \bfT_k \\ \beta_{k+1}^\text{L}\bfe_k^\top \end{bmatrix} \equiv \bfV_{k+1}^\text{L} \bar{\bfT}_k,
\end{equation}
where $(\bfV_k^\text{L})^\top\bfQ\bfV_k^\text{L} = \bfI_k$ and $\bfT_k$ is a symmetric tridiagonal matrix. On the other hand, after $k$ steps of gen-GK bidiagonalization
 (by combining~\eqref{e_vk} and~\eqref{e_uk}), we have
\begin{equation}
	\label{eq:GK}
	\bfM\bfV_k = \bfV_{k+1} \begin{bmatrix}\bfB_k^\top\bfB_k\\ \bar{\beta}_{k+1}\bfe_k^\top\end{bmatrix} = \bfV_{k+1} \bar\bfB_k.
\end{equation}
Next observe that both CG with $\bfQ$-orthogonality and gen-LSQR generate a basis for the same subspace, i.e., \[\Span{\bfV_k^\text{L}} = \Span{\bfV_k} = \krylov{k}(\bfM, \bfA^\top\bfR^{-1}\bfb).\]
Therefore, there exists an orthogonal matrix $\bfZ$~\cite[Equation (4.2.7)]{meyer2000matrix} such that $\bfV_k^\text{L} = \bfV_k\bfZ.$
Since
\[ (\bfV_k^\text{L})^\top\bfQ\bfM\bfV_k^\text{L} = \bfT_k \qquad \mbox{and} \qquad \bfV_k^\top\bfQ\bfM\bfV_k = \bfB_k^\top\bfB_k,\]
we can readily see that $\bfT_k = \bfZ^\top\bfB_k^\top\bfB_k\bfZ$; in other words, $\bfT_k$ is similar to $\bfB_k^\top\bfB_k$.

The approximate solution after $k$ steps of CG with $\bfQ$-inner products can be expressed as
\[ \bfx_k^\text{CG} = \bfV_k^\text{L}(\bfT_k + \lambda^2 \bfI_k)^{-1} \beta_1 \bfe_1 =\bfV_k^\text{L}(\bfT_k + \lambda^2 \bfI_k)^{-1}(\bfV_k^\text{L})^\top\bfQ\bfA^\top\bfR^{-1}\bfb. \]
Similarly, the approximate solution after $k$ steps of gen-LSQR can be expressed as
\[ \bfx_k^\text{gen-LSQR} = \bfV_k(\bfB_k^\top\bfB_k + \lambda^2 \bfI_k)^{-1}\bfB_k^\top\beta_1 \bfe_1 = \bfV_k(\bfB_k^\top\bfB_k + \lambda^2 \bfI_k)^{-1}\bfV_k^\top\bfQ\bfA^\top\bfR^{-1}\bfb. \]
Since
\begin{align*}
\bfV_k^\text{L}(\bfT_k + \lambda^2 \bfI_k)^{-1}(\bfV_k^\text{L})^\top = & \> \bfV_k\bfZ(\bfZ^\top\bfB_k^\top\bfB_k\bfZ + \lambda^2 \bfZ^\top\bfZ)^{-1} \bfZ^\top\bfV_k^\top \\
	=& \>   \bfV_k(\bfB_k^\top\bfB_k + \lambda^2 \bfI_k)^{-1} \bfV_k^\top\,,
\end{align*}
we conclude that $ \bfx_k^\text{CG} = \bfx_k^\text{gen-LSQR}$; i.e., both algorithms generate the same sequence of iterates in exact arithmetic.

Our next goal is to prove the equivalence of iterates between gen-LSMR and MINRES with $\bfQ$-inner products.
After $k$ steps of MINRES with $\bfQ$-inner products, the approximate solution can be expressed as
\[ \bfx_k^\text{MINRES} = \bfV_k^\text{L}(\bar{\bfT}_k^\top\bar{\bfT}_k + \lambda^2 \bfI_k)^{-1}\bar{\bfT}_k^\top\bar{\beta}_1\bfe_1 = \bfV_k^\text{L}(\bar{\bfT}_k^T\bar{\bfT}_k + \lambda^2 \bfI_k)^{-1}(\bfV_k^\text{L})^\top\bfQ\bfM\bfA^\top\bfR^{-1}\bfb. \]
Similarly, after $k$ steps of gen-LSMR, the approximate solution can be expressed as
\[ \bfx_k^\text{gen-LSMR} = \bfV_k(\bar{\bfB}_k^\top\bar{\bfB}_k + \lambda^2 \bfI_k)^{-1}\bar{\bfB}_k^\top\bar{\beta}_1 \bfe_1 = \bfV_k(\bar{\bfB}_k^T\bar{\bfB}_k + \lambda^2 \bfI_k)^{-1}\bfV_k^\top\bfQ\bfM\bfA^\top\bfR^{-1}\bfb. \]
Next, using the relations in~\eqref{eq:Lan} and~\eqref{eq:GK}, it is easy to see that
\[ (\bfV_k^\text{L})^\top\bfM^\top\bfQ\bfM\bfV_k^\text{L} =  \bar{\bfT}_k^\top\bar{\bfT}_k \qquad \mbox{and} \qquad \bfV_k^T\bfM^\top\bfQ\bfM\bfV_k = \bar{\bfB}_k^\top\bar{\bfB}_k\,, \]
so $\bar{\bfT}_k^\top\bar{\bfT}_k = \bfZ^T \bar{\bfB}_k^\top\bar{\bfB}_k \bfZ$.  Thus
\[ \bfV_k^\text{L}(\bar{\bfT}_k^\top\bar{\bfT}_k + \lambda^2 \bfI_k)^{-1}(\bfV_k^\text{L})^\top = \bfV_k(\bar{\bfB}_k^\top\bar{\bfB}_k + \lambda^2 \bfI_k)^{-1}\bfV_k^\top.\]
Therefore,  $ \bfx_k^\text{MINRES} = \bfx_k^\text{gen-LSMR}$ and this concludes the proof.
\end{proof}

Although we have established equivalence among the methods in exact arithmetic, the significance of the above result is that, similar to works comparing CG and LSQR \cite{paige1982lsqr,PaSa82b}, we suggest that in a computational context, one should use gen-LSQR and gen-LSMR and hybrid variants, as they are numerically stable and avoid squaring the condition number (e.g., see discussion in~\cite[Section 8.1]{saad2003iterative}).

\subsection{Equivalence result for generalized hybrid iterates}
Lastly, we justify the generalized hybrid approach, which can be interpreted as a ``project-then-regularize'' approach since regularization parameters are selected on the projected problem. In particular, for fixed $\lambda$ and in exact arithmetic, we show an equivalence between gen-LSQR iterates (which require MVPs with $\bfQ$) and LSQR iterates on the priorconditioned problem~\eqref{eqn:priorcondition} (which requires $\bfL_\bfQ$ or its inverse).  It is worth mentioning that this result is similar in nature to equivalence proofs for ``project-then-regularize'' and ``regularize-then-project'' approaches~\cite{HaHa93,Hansen2010}, but additional care must be taken here to handle the change of variables.

\begin{theorem}
	\label{thm:priorconequiv}
Fix $\lambda \geq 0$. Let $\bfz_k$ be the exact solution to gen-LSQR subproblem~\eqref{e_wlsqr}.  Then the $k$-th iterate of our approach, written as $\boldsymbol\mu + \bfQ\bfV_k \bfz_k$, is equivalent to $\boldsymbol\mu + \bfL_\bfQ^{-1}\bfw_k$, where $\bfw_k$ is the $k$-th iterate of LSQR on the following Tikhonov problem
\begin{equation}\label{e_standard}
\min_{\bfw}\>  \left\|\begin{pmatrix} \bfL_\bfR\bfA\bfL_\bfQ^{-1} \\ \lambda \bfI\end{pmatrix}\bfw - \begin{pmatrix} \bfL_\bfR\bfb \\ \mathbf{0}\end{pmatrix}\right\|_2^2.
\end{equation}
\end{theorem}

\begin{proof}
For notational convenience, we define $\widehat\bfA \equiv \bfL_\bfR\bfA\bfL_\bfQ^{-1}$ and $\widehat\bfb \equiv \bfL_\bfR\bfb$.  Then, LSQR applied to Tikhonov problem~\eqref{e_standard} seeks solutions $\bfw_k$ in the $k$-dimensional Krylov subspace,
\begin{align*}\hat{\mc{S}}_k  \equiv & \>  \krylov{k}(\widehat\bfA\t \widehat \bfA, \widehat \bfA\t \widehat\bfb)\\
	= & \>  \krylov{k}(\bfL_\bfQ^{-\top}\bfA^T\bfR^{-1}\bfA\bfL_\bfQ^{-1}, \bfL_\bfQ^{-\top}\bfA\t \bfR^{-1}\bfb)\\
= & \> \bfL_\bfQ^{-\top}\krylov{k}(\bfA\t\bfR^{-1}\bfA\bfQ, \bfA\t \bfR^{-1}\bfb)\\
= & \> \bfL_\bfQ^{-\top}\mc{S}_k.
\end{align*}
After $k$ steps of standard GK bidiagonalization on~\eqref{e_standard}, we get the following relation,
\begin{align*}
\widehat\bfV_{k}\t (\widehat\bfA^\top \widehat\bfA + \lambda^2 \bfI) \widehat\bfV_{k} = & \> \widehat\bfB_k^\top\widehat\bfB_k + \lambda^2 \bfI\,,
\end{align*}
and the $k$-th LSQR iterate is given by
\[\bfw_k = \widehat\bfV_{k}(\widehat\bfB_k^\top\widehat\bfB_k + \lambda^2 \bfI)^{-1} \widehat\bfV_{k}^\top \widehat\bfA^\top\widehat\bfb.\]

Following the argument in Theorem~\ref{t_lsqr_cg}, there exists an orthogonal matrix $\bfZ$~\cite[Equation 4.2.7]{meyer2000matrix} such that $\widehat\bfV_k = \bfL_\bfQ^{-\top} \bfV_k \bfZ$, and with some algebraic manipulations, we get
\[ \bfZ(\widehat\bfB_k^T\widehat\bfB_k + \lambda^2 \bfI)\bfZ\t = \bfB_k^T\bfB_k + \lambda^2 \bfI\,.\]
Finally, we can show that
\begin{align*}
\bfmu + \bfL_\bfQ^{-1}\bfw_k & = \bfmu + \bfL_\bfQ^{-1}  \widehat\bfV_{k}(\widehat\bfB_k^\top\widehat\bfB_k + \lambda^2 \bfI)^{-1} \widehat\bfV_{k}^\top \widehat\bfA^\top\widehat\bfb \\
& = \bfmu + \bfQ  \bfV_{k} \bfZ (\widehat\bfB_k^\top\widehat\bfB_k + \lambda^2 \bfI)^{-1} \bfZ\t \bfV_k\t \bfL_\bfQ^{-1} \widehat\bfA^\top\widehat\bfb \\
& = \bfmu +\bfQ\bfV_{k}(\bfB_k^\top\bfB_k + \lambda^2 \bfI)^{-1} \bfV_{k}^\top \bfQ\t \bfA^\top\bfR^{-1}\bfb \\
& = \bfmu + \bfQ\bfV_{k}\bfz_k \,,
\end{align*}
where the last step follows from the fact that $\bfV_{k}^\top \bfQ\t \bfA^\top\bfR^{-1}\bfb = \bfB_k\t \beta \bfe_1$. This concludes the proof.
\end{proof}

The significance of the above proof is that for fixed $\lambda,$ gen-LSQR iterates are equivalent to iterates of the priorconditioned approach in exact arithmetic.
Thus, as in priorconditioning, an algorithmic advantage
is that the structure of the prior is directly incorporated into the transformed operator, allowing for faster convergence under some conditions.
This typically occurs, for example, when the singular values of the prior-conditioned matrix $\widehat \bfA = \bfL_\bfR \bfA \bfL_\bfQ^{-1}$ decay faster than the singular values of $\bfA$ (c.f., Figure~\ref{fig:dpc}).  This is important because early subspaces generated by Krylov methods such as LSQR tend to contain directions corresponding to the larger singular values \cite{hansen2006deblurring}.  Hence, termination before contamination by small singular values may occur earlier.

Next we provide an illustration, similar to those for standard GK approaches \cite{GoLuOv81,saad1980rates}, where we use the heat example described in Section~\ref{s_gsvd} to show that the singular values of gen-GK bidiagonal matrix, $\bfB_k,$ and subsequently-defined tridiagonal matrix $\bar{\bfB}_k$, approximate the singular values of $\widehat \bfA$. In Figure~\ref{fig:illustration}(a), we provide the first $50$ singular values of $\widehat \bfA$, along with the singular values of $\bfB_k$ and $\bar{\bfB}_k$ for $k = 5, 20, 35, 50$.  We also provide plots of $\sigma_i^2(\bfB_k)$ to illustrate the interlacing property, where the singular values of $\bar{\bfB}_k$ interlace the squares of the singular values of $\bfB_k$ \cite{chung2015hybrid, golub1973some,thompson1976behavior,wilkinson1965algebraic,bunch1978rank}.  The impact of loss of orthogonality is evident in Figure~\ref{fig:illustration}(b), where singular value approximations $\sigma_i(\bfB_k)$ are significantly different than the desired singular values $\sigma_i(\widehat\bfA)$.

\begin{figure}[bthp]
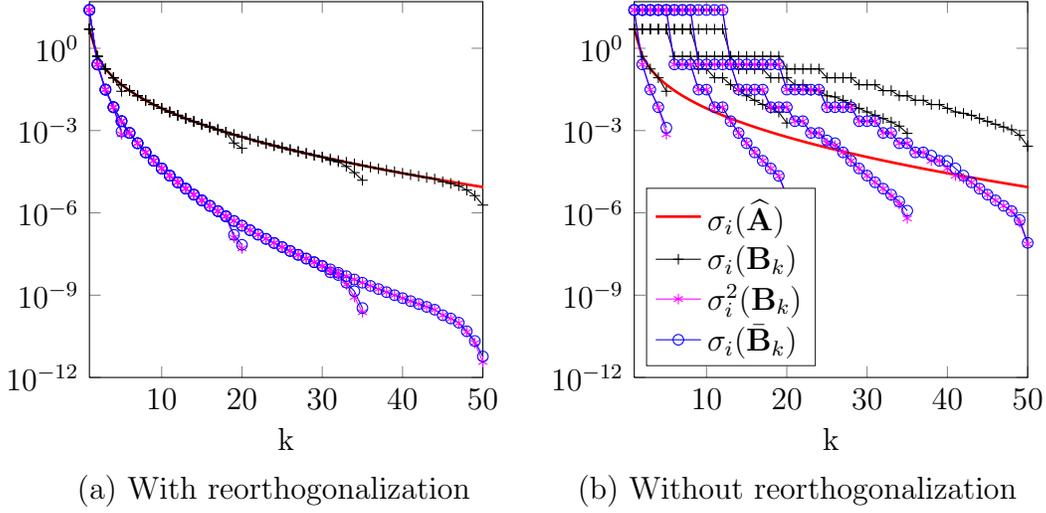

	\begin{center}
		\iwidth=55mm
		\iheight=50mm
		\begin{tabular}{cc}
			\hspace{-20pt}
			\input{figs/svdapprox2.tex} &
			\input{figs/svdapprox3.tex} \\
			(a) With reorthogonalization &(b) Without reorthogonalization
		\end{tabular}
	\end{center}
 \caption{Both figures (a) and (b) contain the first $50$ singular values of $\widehat\bfA$, along with the singular values of $\bfB_k$ and $\bar{\bfB}_k$ for $k = 5, 20, 35, 50.$  Plots of $\sigma_i^2(\bfB_k)$ are provided to illustrate an interlacing property.  Results in (a) correspond to reorthogonalization and results in (b) demonstrate the effects of loss of orthogonality.}
	\label{fig:illustration}
\end{figure}

Another significance of Theorem~\ref{thm:priorconequiv} is that the gen-GK hybrid approach can be used to incorporate prior information
without an explicit regularization term, i.e., $\lambda = 0$. Arridge et al~\cite{arridge2014iterated} further advocated priorconditioning for the additional reason that the transformed variables are dimensionless, which removes issues with physical units.  Also, for regularization functionals designed to promote edges, the authors in~\cite{arridge2014iterated} noted that Krylov subspace methods have poor convergence whereas the transformed problem amplifies directions spanning the prior covariance, thereby improving convergence. They modeled the prior precision matrix $\bfQ^{-1}$ as a sparse matrix obtained from the discretization of a PDE operator and proposed a factorization-free preconditioned LSQR approach called MLSQR that involves generating a basis for the Krylov subspace \begin{equation}
\label{eq:Arridge}
	\krylov{k}(\bfQ\bfA^\top\bfR^{-1}\bfA,\bfQ\bfA^\top\bfR^{-1}\bfb).
\end{equation}

The computational advantage of gen-LSQR over priorconditioning and MLSQR is that we only require access to $\bfQ$ via MVPs. On the contrary, priorconditioning explicitly requires matrix $\bfL_\bfQ^{-1}$(i.e., the square-root of $\bfQ$) or solves with $\bfL_\bfQ$, both of which can be prohibitively expensive. In MLSQR, generating a basis for~\eqref{eq:Arridge} requires repeated MVPs with $\bfQ$, which amounts to several solves using $\bfQ^{-1}$ (recall that~\cite{arridge2014iterated} assumes that $\bfQ^{-1}$ is sparse and forming $\bfQ$ is unnecessary). For the priors of interest here, entries of $\bfQ$ are modeled directly and MVPs with $\bfQ$ can be done efficiently, so the proposed gen-LSQR and corresponding generalized hybrid approach are computationally advantageous over priorconditioning and MLSQR.

\section{Numerical Experiments} 
\label{sec:numerical_experiments}

In this section, we provide two examples from image reconstruction -- the first is a model problem in seismic traveltime tomography, and the second is a synthetic problem in super-resolution image reconstruction.  In all of the provided results, gen-LSQR and gen-HyBR correspond to solutions computed as~\eqref{eqn:undo_change} where $\bfx_k$ is the solution to~\eqref{e_wlsqr}, where $\lambda=0$ for gen-LSQR and $\lambda$ for gen-HyBR was selected using the various methods described in Section~\ref{s_param}.

\subsection{Seismic tomography reconstruction} 
\label{sub:experiment_1_seismic_tomography}

For our first application, we consider a synthetic test problem from cross-well seismic tomography~\cite{ambikasaran2013large}. The goal is to image the slowness in the medium, where slowness is defined as the reciprocal of seismic velocity. Seismic sources are fired sequentially at each of the sources and the time delay between firing and recording is measured at the receivers; this is the travel time. As a first order approximation, the seismic wave is assumed to travel along a straight line from the sources to the receivers without reflections or refractions. One measurement is recorded for each source-receiver pair. All together we have $n_\text{sou} = 50$ sources and $n_\text{rec} = 50$ receivers. and therefore, there are $m = n_\text{rec} n_\text{sou}$ measurements. The domain is discretized into $\sqrt{n} \times \sqrt{n}$ cells and within each cell, the slowness is assumed to be constant. Therefore, the travel time is a sum of the slowness in the cell, weighted by the length of the ray within the cell. The inverse problem is, therefore, to reconstruct the slowness of the medium from the discrete measurements of the travel times. We assume that the travel times are corrupted by Gaussian noise, so that the measurement takes the form in~\eqref{eq:problem}
where $\bfd$ are the observed (synthetic) travel times, $\bfs$ is the slowness that we are interested in imaging and $\bfA$ is the measurement operator, whose rows correspond to each source-receiver pair and are constructed such that their inner product with the slowness would result in the travel time. As constructed above, each row of $\bfA$ has $\mc{O}(m\sqrt{n})$ entries, so that  $\bfA$ is a sparse matrix with $\mc{O} (m\sqrt{n})$ non-zero entries. As the ``true field,'' we use a truncated Karhunen-Lo\'{e}ve expansion
$$s(\bfx)\> = \> \mu(\bfx) + \sum_{k=1}^{N_k} \sqrt{\lambda_k}\xi_k {\phi}_k(\bfx),$$
where $\xi_k \sim \mc{N}(0,1)$ are i.i.d.\ random variables, and $(\lambda_k,\phi_k)$ are the eigenpairs of the integral operator

$$T: L^2(\Omega)\rightarrow L^2(\Omega), \qquad (T\phi)(\bfx) \equiv \int_\Omega \kappa(\bfx,\bfy)\phi(\bfy)d\bfy,$$
and covariance kernel
$\kappa(\bfx,\bfy) = \theta\exp\left( -(r/L)^2 \right)$. Furthermore, we choose $\theta = 1\times 10^{-3}$, $\mu(\bfx) = 0.08$ seconds/meter and $L = 100 $ meters. Here $N_k$ is chosen to be $20$.

\begin{figure}[!ht]\centering
	\label{fig:RTrelerror}
\includegraphics[scale=.5]{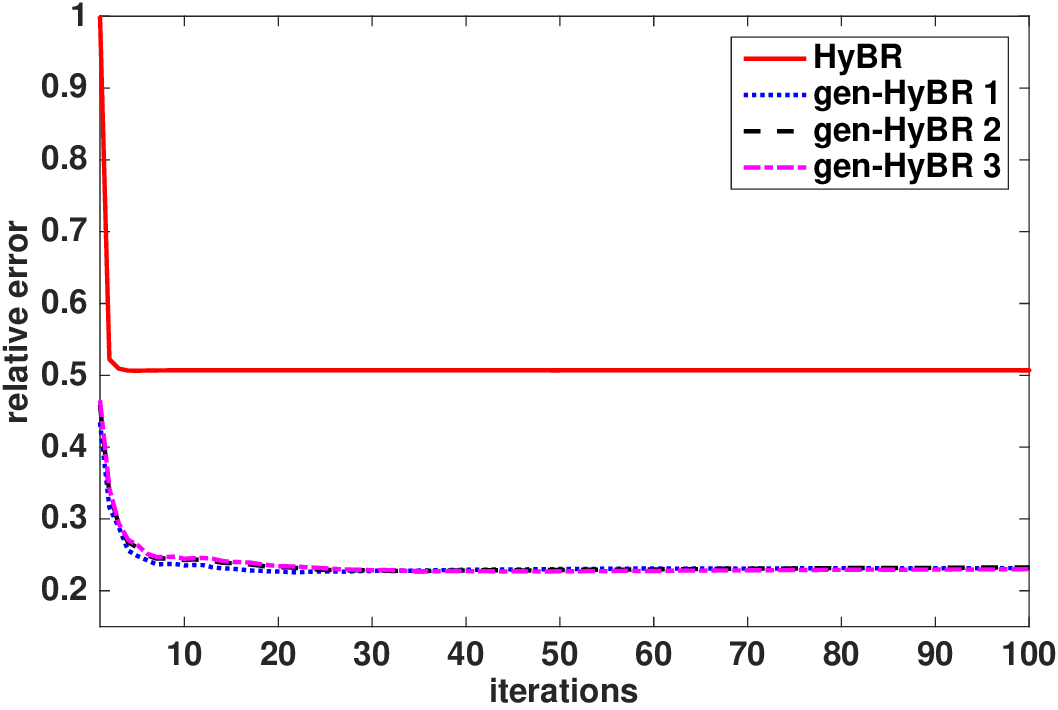}
\caption{Comparison of standard and generalized iterative methods for ray-tomography application.  Results for HyBR and gen-HyBR correspond to hybrid methods, where the optimal regularization parameter was used at each iteration. Here `gen-HyBR 1' corresponds to $\nu = 1/2$, `gen-HyBR 2' corresponds to $\nu = 3/2$, and  `gen-HyBR 3' corresponds to $\nu = 5/2$, where $\nu$ is a parameter of the Mat\`{e}rn kernel. }
\end{figure}

To avoid an ``inverse crime'', we use a different covariance kernel for the reconstruction. We consider three kernels from the Mat\'{e}rn covariance family, with the parameters $\nu = 1/2,3/2,5/2$ and $\alpha = L^{-1}$. Furthermore, we added $2\%$ Gaussian noise to simulate measurement error, i.e., $\frac{\norm{\bfepsilon}{2}}{\norm{\bfA\bfs_\true}{2}}=.02$.
In this application, we study the effect of the covariance kernels on the reconstruction; therefore, for the reconstructions, optimal regularization parameters were used. A numerical comparison of various parameter selection techniques will be provided in the next example.

Figure~\ref{fig:RTrelerror} shows the iteration history of the relative error.  `HyBR' corresponds to  $\bfQ = \bfI$ solved using the standard LSQR approach, whereas `gen-HyBR 1/2/3' correspond to $\bfQ$ constructed using a  Mat\'{e}rn kernel with $\nu=1/2,3/2,5/2$ respectively. It is evident that errors corresponding to the generalized hybrid methods are much lower than those of the standard approach. This is also confirmed in the reconstructions provided in Figure~\ref{fig:RTreconimages_mat}.  Reconstructions for gen-HyBR 1 and gen-HyBR 3 were similar to gen-HyBR 2 and are not presented.

\begin{figure}[!ht]\centering
		\label{fig:RTreconimages_mat}
\includegraphics[scale=0.3]{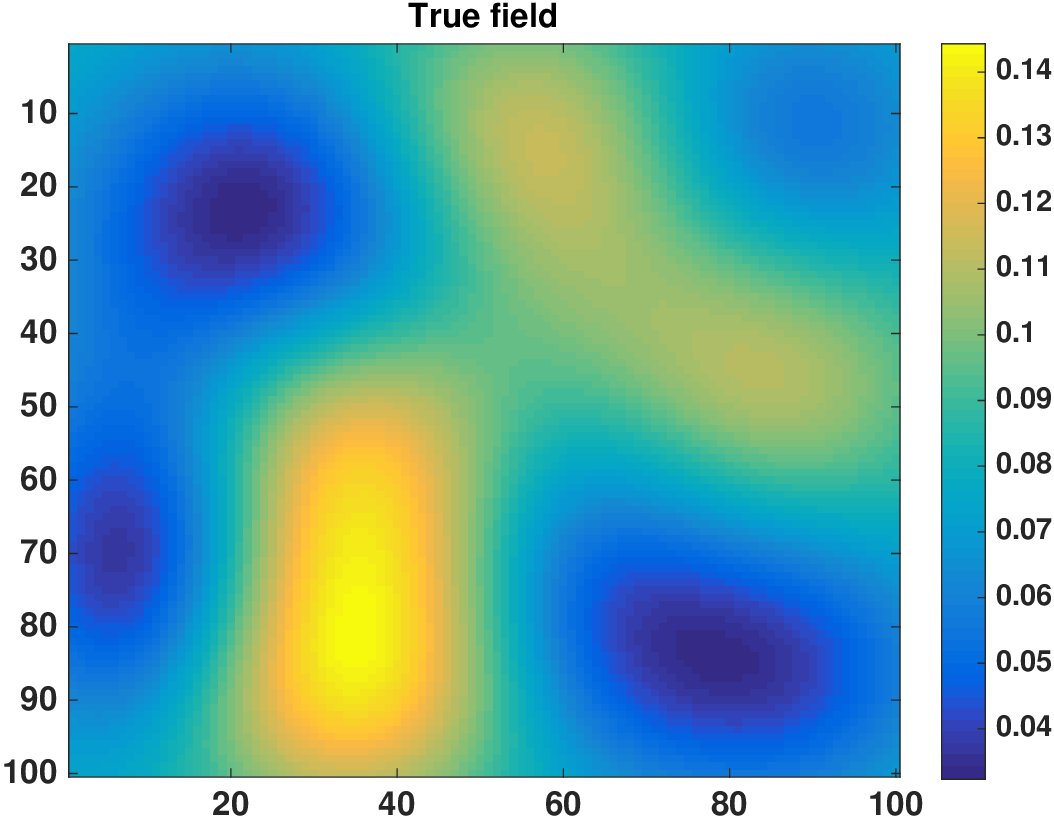}
\includegraphics[scale=0.3]{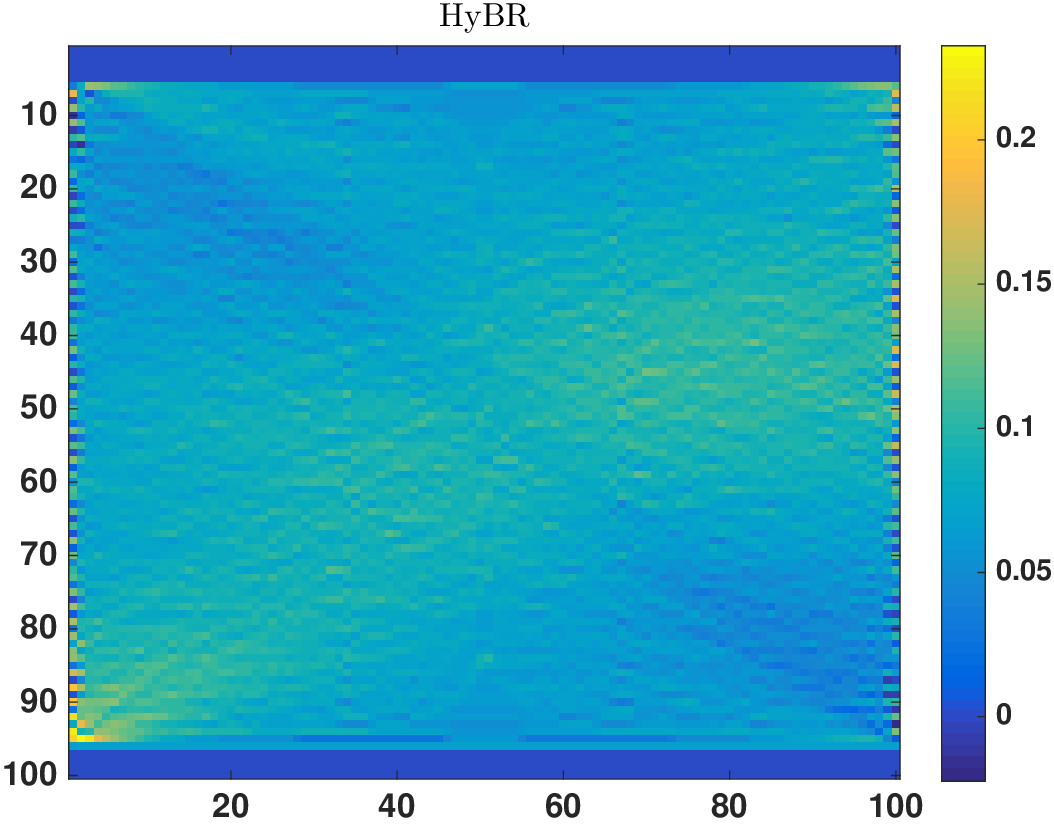}
\includegraphics[scale=0.3]{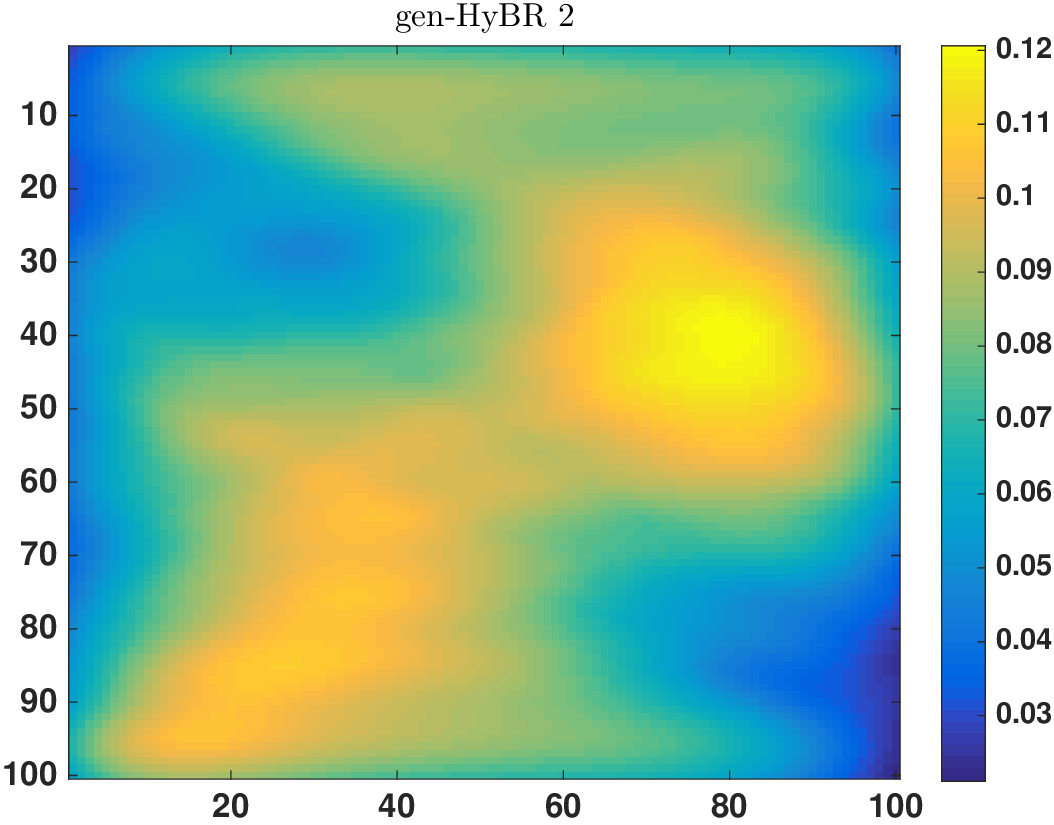}
\caption{(top left) True image, (top right) reconstruction using HyBR, optimal $\lambda^2 = 4.71\times 10^{-1}$ (bottom) reconstruction using gen-HyBR (Mat\`{e}rn kernel $\nu=3/2$), optimal  $\lambda^2 = 2.32\times 10^{-7}$.}
\end{figure}

\subsection{Super-resolution image reconstruction} 
\label{sub:experiment_2_super_resolution_imaging}
In this experiment, we consider a super-resolution imaging example, where the goal is to construct one high-resolution image from a set of low-resolution ones.   Super-resolution imaging has gained increasing popularity over the last few years, as it allows scientists to obtain images with higher spatial resolution, without having to sacrifice signal-to-noise ratio and dynamic range \cite{park2003super,farsiu2004advances,ChHaNa06}.  Let $\bfb_i, i=1,\dots,K$ represent the set of vectorized low-resolution images, each containing different information of the same scene. Then super-resolution imaging can be modeled as a linear inverse problem~\eqref{eq:problem}, where $\bfs$ represents a vectorized high-resolution image and
	\begin{equation}
	\bfA = \begin{bmatrix} \bfA_1 \\ \vdots \\ \bfA_K
	\end{bmatrix} \quad \mbox{and}\quad \bfb = \begin{bmatrix} \bfb_1 \\ \vdots \\ \bfb_K
	\end{bmatrix}
\end{equation}
where $\bfA_i$ models the forward process corresponding to $\bfb_i$.  More specifically, each $\bfA_i$ represents a linear transformation (e.g., rotation or linear shift) of the high-resolution image, followed by a restriction operation that takes a high-resolution image to a low-resolution image.  Bilinear interpolation was used to generate the sparse interpolation matrix in $\bfA_i$ and a Kronecker product was used to represent the restriction operation, as described in \cite{ChHaNa06}.
For this example, we assume that all parameters defining $\bfA$ (e.g., rotation and translation parameters) are known.  If this is not the case, then such knowledge may be estimated using image registration techniques \cite{modersitzki2003numerical}, or a separable nonlinear LS framework can be considered for simultaneous estimation of both the parameters and the high-resolution image \cite{ChHaNa06, Chung2010b}.  In this paper, we investigate generalized iterative approaches for solving the linear super-resolution problem, but we remark that this approach could also be used to solve the linear subproblem within a nonlinear optimization scheme.  Thus, for this example, the problem can be stated as given $\bfA$ and $\bfb$, the goal of super-resolution imaging is to reconstruct the true high-resolution image.

For this problem, the high-resolution image of a MRI brain slice consisted of $128\times 128$ pixels and is shown in Figure~\ref{fig:SRexample}(a). We generated 5 low-resolution images of size $32 \times 32$ pixels, where each image is slightly rotated from the others.  White noise was added to the low-resolution images, where the noise level was $.02$.  Four of the observed low-resolution images are shown in Figure~\ref{fig:SRexample}(b). In all of the results, we assumed $\bfR = \bfI$ and $\bfmu=\bfzero$.
\begin{figure}[!bt]
 	\label{fig:SRexample}
 	\begin{center}
		\begin{tabular}{ccc}
\multirow{2}{*}[6em]{\includegraphics[scale=.4]{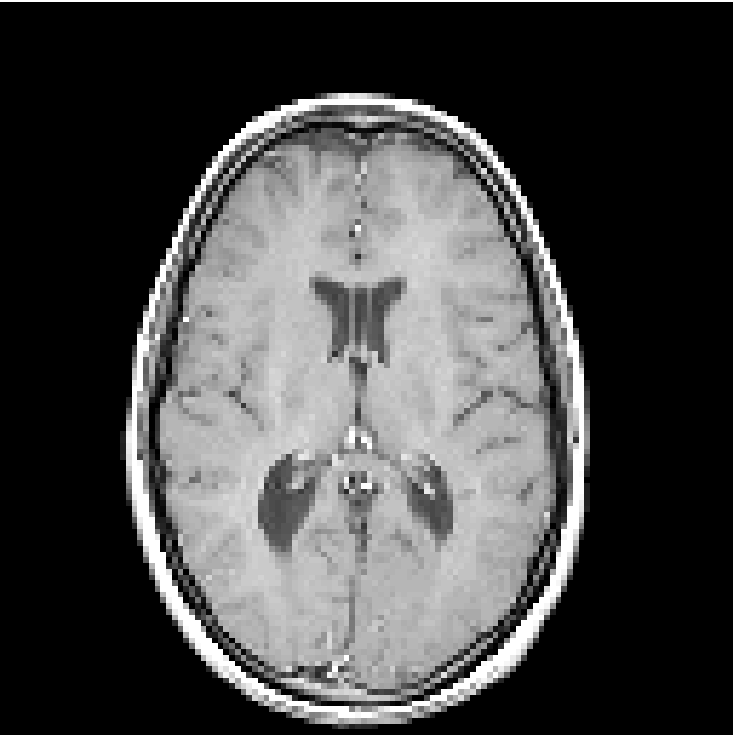}} &
		\includegraphics[scale=.2]{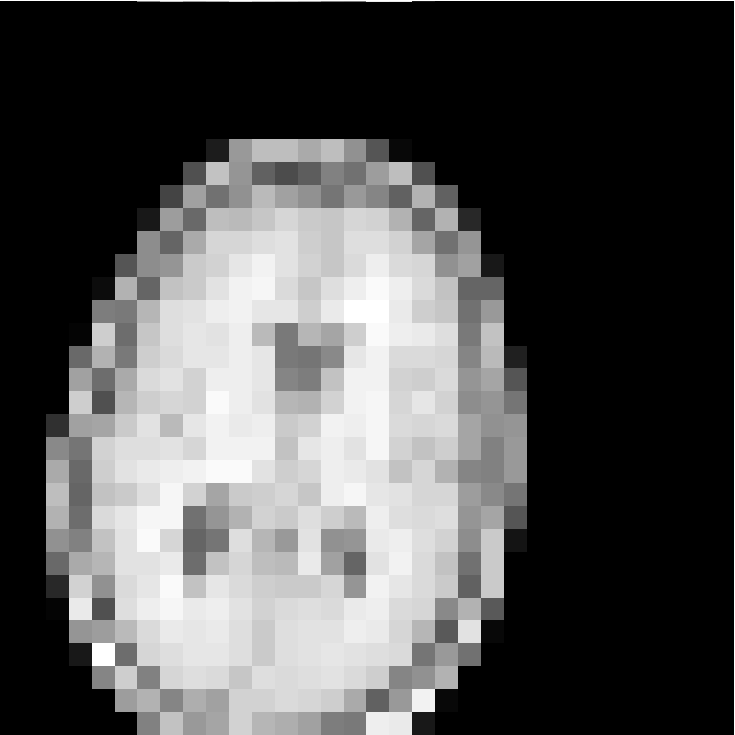}  &
		\includegraphics[scale=.2]{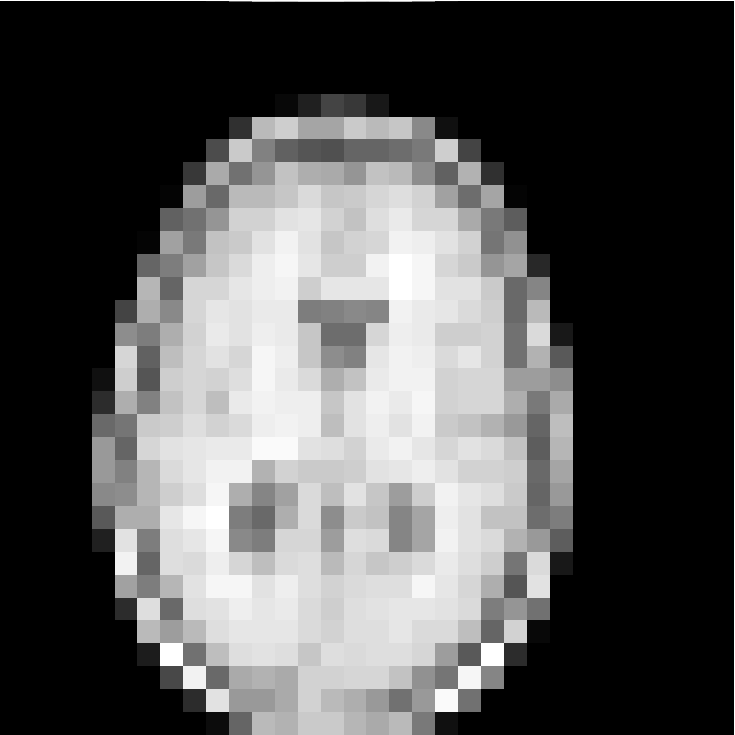}  \\
&		\includegraphics[scale=.2]{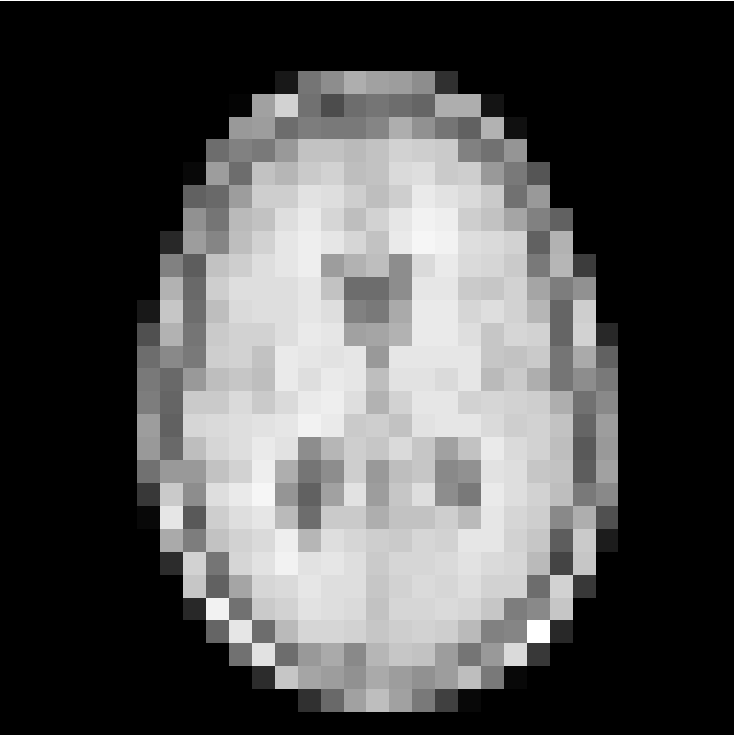}  &
		\includegraphics[scale=.2]{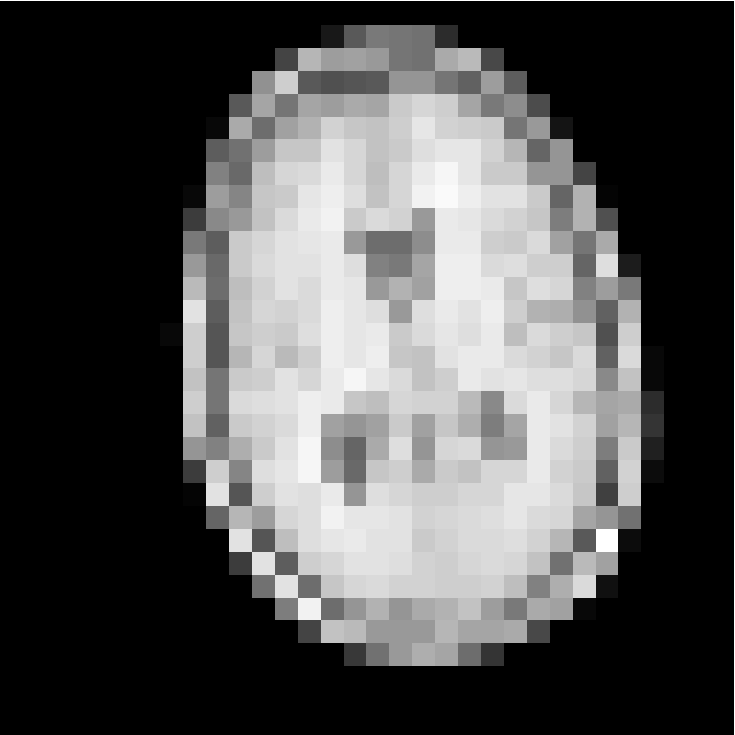} \\
		(a) High resolution image & \multicolumn{2}{c}{(b) Sample low-resolution images} 		\end{tabular}
 	\end{center}
 \caption{Super-resolution imaging example.}
 \end{figure}

First, we compare standard iterative methods where $\bfQ = \bfI$ to the proposed generalized LSQR and LSMR methods and their generalized hybrid variants, where $\bfQ$ represents a Mat\'{e}rn kernel with $\nu=0.5$ and $\alpha = 0.007$.  In Figure~\ref{fig:SRrelerror}, we provide the relative error plots, which show that the generalized approaches can produce reconstructions with smaller relative reconstruction error than the standard approaches, and with an appropriate choice of the regularization parameter (here we use $\lambda_{\rm opt}$), semi-convergence can be avoided.  Methods based on LSMR exhibit delayed semiconvergence, as observed and discussed in \cite{chung2015hybrid}.  Subimages of the HyBR-opt and gen-HyBR-opt reconstructions at iteration 100 are shown in Figure~\ref{fig:SRreconimages}, where it is evident that using the generalized approach can result in improved, smoother image reconstructions.

\begin{figure}[!bt]\centering
	\label{fig:SRrelerror}
\includegraphics[scale=.5]{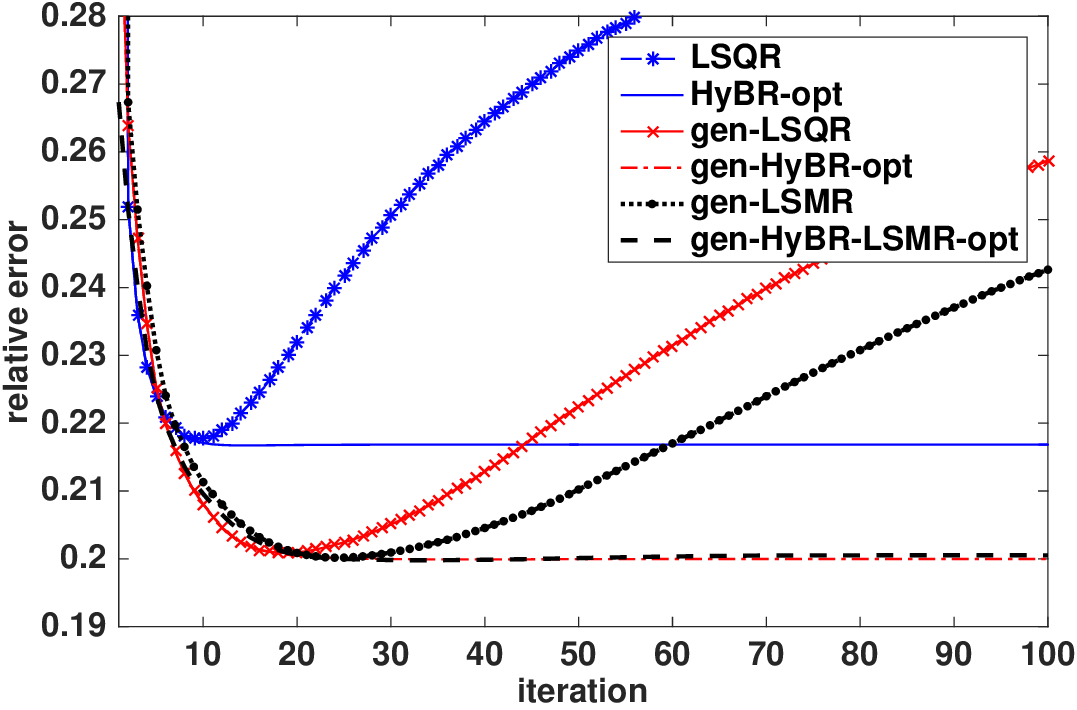}
\caption{Comparison of standard and generalized iterative methods for super-resolution image reconstruction.  Results for HyBR-opt, gen-HyBR-opt, and gen-HyBR-LSMR-opt correspond to hybrid methods, where the optimal regularization parameter was used at each iteration.}
\end{figure}

\begin{figure}[!bt]\centering
		\label{fig:SRreconimages}
\includegraphics[scale=.75]{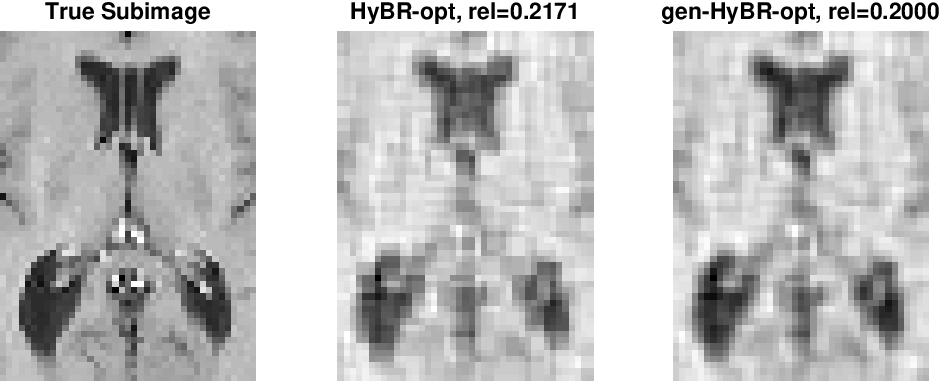}
\caption{Subimages of reconstructed images for HyBR-opt and gen-HyBR-opt demonstrate a qualitative improvement of generalized hybrid methods, where $\bfQ$ represents a covariance matrix from the Mat\'{e}rn class.  The subimage of the true image is provided on the left for reference and relative reconstruction errors are provided as rel.}
\end{figure}

In Figure~\ref{fig:SRrelerror_reg}, we provide relative reconstruction errors for gen-HyBR, where different methods were used to select the regularization parameter.  In addition to gen-LSQR (where $\lambda = 0$) and gen-HyBR-opt, which are both in Figure~\ref{fig:SRrelerror}, we also consider parameters described in Section~\ref{s_param}, namely, $\lambda_{\rm dp},\lambda_{\rm gcv}$, and $\lambda_{\rm wgcv}$. For the discrepancy principle, we used the true value for
$\delta$
and $\tau=1$ in~\eqref{e_dp}.  Such values may not be available in practice.  We refer the interested reader to other works on noise estimation, e.g., \cite{donoho1995noising}.  Even with the actual value of $\nu$ in~\eqref{e_upre_proj}, UPRE resulted in poor reconstructions in our experience, so we do not include those results here.

\begin{figure}[!ht]\centering
	\label{fig:SRrelerror_reg}
\includegraphics[scale=.5]{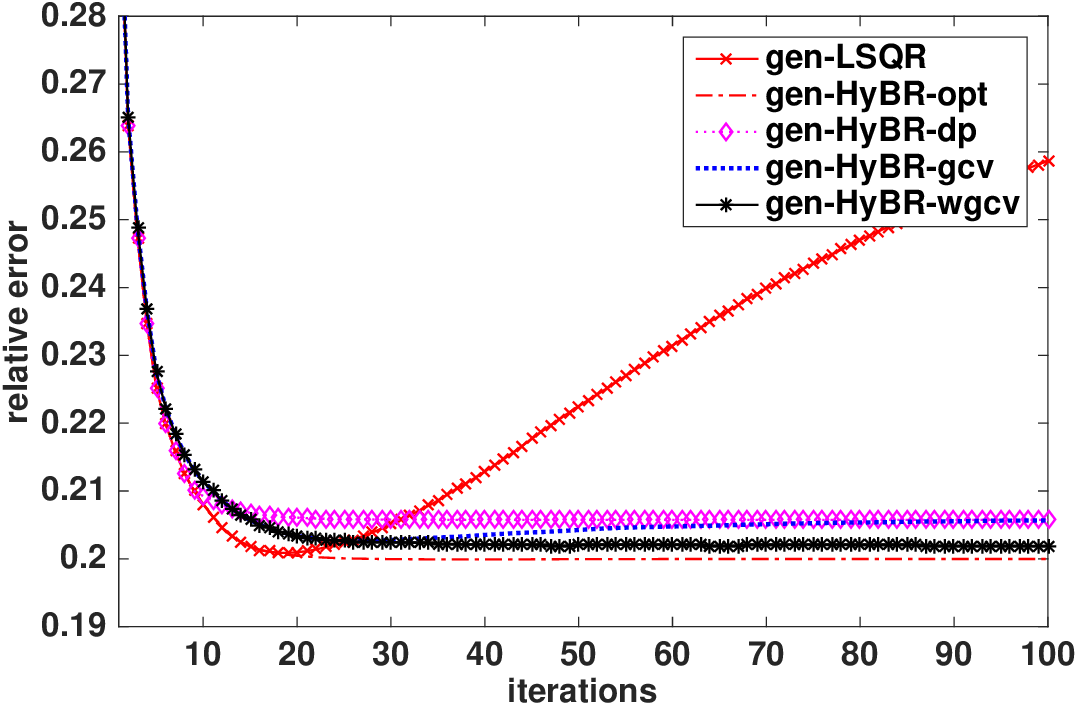}
\caption{Relative error curves for generalized hybrid methods, for various regularization parameter choice methods. gen-LSQR uses $\lambda = 0,$ gen-HyBR-opt uses the optimal regularization parameter (that which minimizes the reconstruction error), dp represents the discrepancy principle, gcv represents the generalized cross validation method, and wgcv represents a weighted variant of gcv.}
\end{figure}

In the next experiment, we consider the impact of different choices of $\bfQ$.  We choose $\bfQ_1,$ $\bfQ_2,$ and $\bfQ_3,$ from the Mat\'{e}rn covariance family where $\nu = 1/2,1/2,\infty$ and $\alpha = .007, .003, .007$ respectively.  For all reconstructions, we used $\lambda_{\rm wgcv}$.  Relative reconstruction  errors and reconstructed images are provided in Figures~\ref{fig:SRmatern} and~\ref{fig:SRreconimages_mat} respectively, where absolute error images are provided in inverted colormap, so that black corresponds to larger absolute reconstruction error.
\begin{figure}[bthp!]\centering
		\label{fig:SRmatern}
\includegraphics[scale=.5]{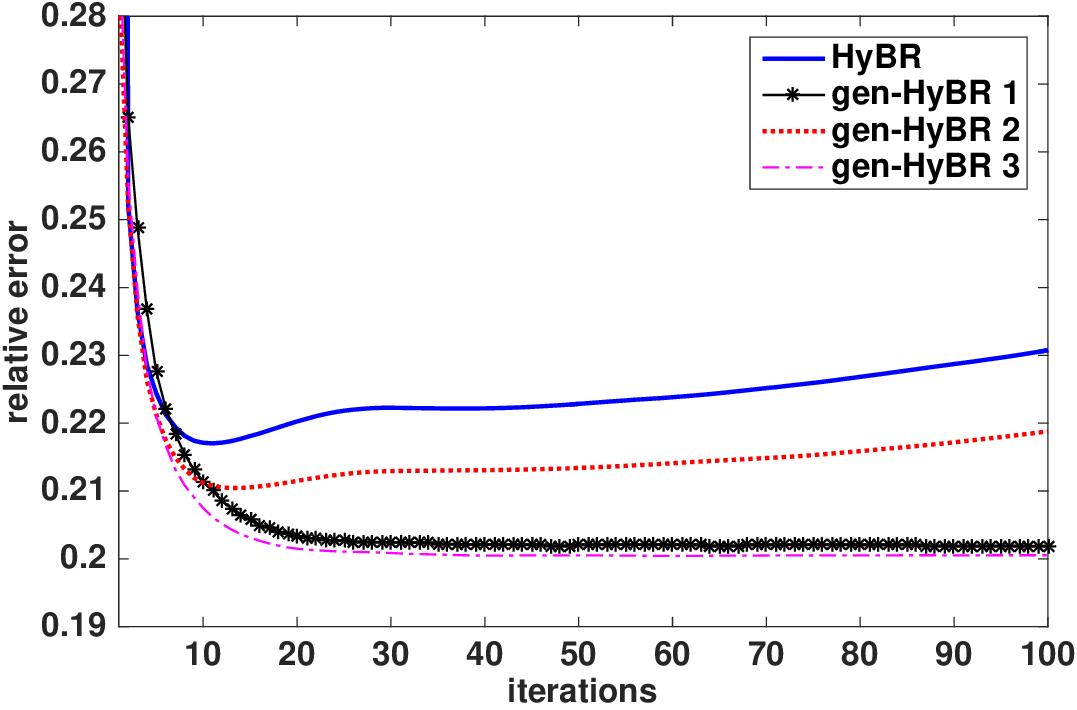}
\caption{Relative error curves for different covariance matrices from the Mat\'{e}rn class for the super-resolution imaging example.  All results used WGCV to select regularization parameters.}
\end{figure}

\begin{figure}[bthp!]\centering
		\label{fig:SRreconimages_mat}
\includegraphics[scale=.8]{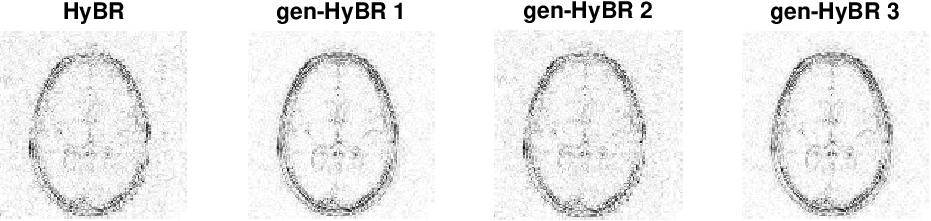}
\caption{Absolute error images (in inverted colormap, where white corresponds to 0) for reconstructions obtained via HyBR($\bfQ = \bfI$) and gen-HyBR for various $\bfQ$ from the  Mat\'{e}rn covariance family.  All results correspond to regularization parameters computed using WGCV.}
\end{figure}


\section{Conclusions} 
\label{sec:conclusions}
In this paper, we described a generalized hybrid iterative method for computing maximum a posteriori estimators for large-scale Bayesian inverse problems, also known as general-form Tikhonov solutions. Our approach is different from previous approaches since we avoid forming (either explicitly, or MVPs with) the square root or inverse of the prior covariance matrix.  By exploiting the shift-invariant property of Krylov subspaces generated by the generalized Golub-Kahan bidiagonalization process, the proposed hybrid approach has the benefit that regularization parameters can be determined automatically using the projected problem.  Theoretical results provide connections to GSVD filtered solutions, priorconditioned solutions, and iterative methods with weighted inner products.  Numerical results from seismic tomography reconstruction and super-resolution imaging validate the effectiveness and efficiency of our approach.

\section{Acknowledgements}
Some of this work was conducted as a part of SAMSI Program on Computational Challenges in Neuroscience (CCNS) 2015-2016. This material was based upon work partially supported by the National Science Foundation under Grant DMS--1127914 to the Statistical and Applied Mathematical Sciences Institute.

\appendix
\section{Parameter selection methods} 
\label{sec:appendix}
Parameter selection using the GCV was described in Section~\ref{s_param}. In this appendix, we give details of other parameter selection techniques such as
the Discrepancy Principle (DP) and the Unbiased Predictive Risk Estimator (UPRE).  These methods rely on an \textit{a priori} estimate for the noise level. For DP, the regularization parameter, $\lambda_{\rm dp},$ is selected so that the residual norm is on the order of the noise in the data, that is
\begin{equation}
	\label{e_dp}
\norm{\bfA \bfs_\lambda- \bfd}{\bfR^{-1}}^2 = \tau \delta,
\end{equation}
where $\delta$ is an approximation to the expected value of the squared norm of the noise and $\tau \gtrsim 1$ is a user-defined parameter.
From our assumption of the noise $\B{\epsilon}$, the expression $\bfL_\bfR\B{\epsilon}$ is a whitening transformation and it follows that the expected value of $\norm{\B{\epsilon}}{\bfR^{-1}}^2 = \normtwo{\bfL_\bfR\B{\epsilon}}^2 $ is approximately $m$.
In the generalized hybrid approach, we seek the DP parameter, $\lambda_{\rm dp},$ that satisfies $\normtwo{\beta_1\bfe_1 - \bfB_k\bfz_{k,\lambda}}^2 = \tau\delta$ at each iteration $k$, where $\delta \approx  m$. The UPRE method selects a regularization parameter that minimizes an unbiased estimator for the expected value of the predictive risk. More concretely, UPRE parameter, $\lambda_{\rm upre},$ minimizes
\begin{equation}\label{e_upre}
 U(\lambda) \equiv \frac{1}{m}\norm{\bfA\bfs_\lambda - \bfd}{\bfR^{-1}}^2 + \frac{2}{m}\trace\left(\bfL_\bfR \bfA\bfA^\dagger_\lambda\right) - 1\,,
\end{equation}
where again the GSVD, if available, could be used to simplify the expression. At each generalized hybrid iteration, regularization parameter, $\lambda_{\rm upre},$ is chosen to minimize the UPRE function corresponding to the projected problem,
\begin{equation}\label{e_upre_proj}
U_\text{proj}(\lambda) \equiv \frac{1}{k}\normtwo{\bfB_k\bfz_{k,\lambda}- \beta_1\bfe_1}^2 + \frac{2}{k}\trace\left(\bfB_k\bfB_{k,\lambda}^\dagger\right) - 1.
\end{equation}
An analysis in \cite{renaut2015hybrid} showed that the obtained UPRE regularization parameter for the projected problem (i.e., the minimizer of~\eqref{e_upre_proj}) provides a good approximation for the regularization parameter for the full problem on the projected space.

\bibliography{references}

\bibliographystyle{siamplain}

\end{document}